\documentclass[12pt]{amsart}

\author[E. Gnang]{Edinah K. Gnang}
\address{School of Mathematics, Institute for Advanced Study, 1 Einstein Drive, Princeton, NJ, USA}
\email{gnang@ias.edu}
\author[M. Radziwi\l\l]{Maksym Radziwi\l\l}
\address{School of Mathematics, Institute for Advanced Study, 1 Einstein Drive, Princeton, NJ, USA}
\email{maksym@ias.edu}
\author[C. Sanna]{Carlo Sanna}
\address{Universit\`a degli Studi di Torino, Department of Mathematics, Turin, Italy}
\email{carlo.sanna.dev@gmail.com}

\thanks{The first and second authors were partially supported by NSF grant DMS-1128155}

\title{Counting arithmetic formulas}

\usepackage{amsmath}
\usepackage{amssymb}
\usepackage{amsthm}
\usepackage{geometry}
\usepackage[colorlinks=true]{hyperref}
\usepackage{enumerate}
\usepackage{graphicx}
\usepackage{epstopdf}
\usepackage[section]{placeins}

\newtheorem{thm}{Theorem}[section]
\newtheorem{lem}[thm]{Lemma}
\newtheorem{cor}[thm]{Corollary}
\theoremstyle{definition}
\newtheorem{defi}{Definition}[section]

\def\Z{\mathbf{Z}}
\def\N{\mathbf{N}}
\def\R{\mathbf{R}}
\def\P{\mathbf{P}}
\def\l{\mathbf{l}}
\def\r{\mathbf{r}}
\def\T{\mathfrak{T}}

\def\lb{\operatorname{lb}}

\uchyph=0

\begin{document}

\begin{abstract}
An \emph{arithmetic formula} is an expression involving only the constant $1$, and the binary operations of addition and multiplication, with multiplication by $1$ not allowed. 
We obtain an asymptotic formula for the number of arithmetic formulas evaluating to $n$ as $n$ goes to infinity, solving a conjecture of E.~K.~Gnang and D.~Zeilberger \cite{EdiZ}.
We give also an asymptotic formula for the number of arithmetic formulas evaluating to $n$ and using exactly $k$ multiplications.
Finally we analyze three specific encodings for producing arithmetic formulas. For almost all integers $n$, we compare the lengths of the arithmetic formulas for $n$ that each encoding produces with the length of the shortest formula for $n$ (which we estimate from below). 
We briefly discuss the time-space tradeoff offered by each. 
\end{abstract}

\maketitle

\section{Introduction}
\subsection{Counting arithmetic formulas}
An \textit{arithmetic formula} is an expression involving only the constant $1$
and the binary operations of addition and multiplication, with multiplication by $1$ not allowed. 
For example, $4$ has exactly $6$ arithmetic formulas, 
\begin{gather*}
\begin{array}{ccc}
1 + (1 + (1 + 1)), & 1 + ((1 + 1) + 1), & (1 + (1 + 1)) + 1,
\end{array} \\
\begin{array}{cccc}
((1 + 1) + 1) + 1, & (1 + 1) + (1 + 1), & (1 + 1) \times (1 + 1) 
\end{array}
\end{gather*}
A systematic study of arithmetic formulas was initiated by Patrick, Gnang and Zeilberger \cite{EdiZ}~\cite{EdiK}.
The number of arithmetic formulas evaluating to $n$ using only addition corresponds to the number of ways one can place a sequence of parentheses in the sum $1 + 1 + \cdots + 1$, containing $n$ times the number $1$. It is well known that there is $C_{n-1}$ ways of doing this, where $C_n$ is the Catalan number
\cite[Ch.~6, Corollary~6.2.3]{Sta01},
\begin{equation*}
C_m := \frac{1}{m + 1} \binom{2m}{m} \sim \frac{1}{\sqrt{\pi}}
\frac{4^m}{m^{3/2}} \text{ as } m \rightarrow \infty.
\end{equation*}
On the other hand, the number of arithmetic formulas for $n$ using addition and multiplication is more mysterious. 
It was conjectured by Gnang and Zeilberger \cite{EdiZ} that there is an asymptotic of the form $c \cdot \rho^n \cdot n^{-3/2}$, with two constants $c > 0$ and $\rho > 4$ (with $\rho$ most likely a transcendental number). Our first result is a proof of this conjecture. 
Let $f(n)$ be the number of arithmetic formulas for $n$ \cite{Hei13}. 
\begin{thm}\label{thm:f_asymp}
There exists constants $c > 0$ and $\rho > 4$ 
such that
\begin{equation*}
f(n) \sim \frac{c \rho^n}{n^{3/2}} ,
\end{equation*}
as $n \rightarrow \infty$. In fact,
\begin{align*}
c & = 0.145691854699979 \ldots \\
\rho & = 4.076561785276046 \ldots
\end{align*}
\end{thm}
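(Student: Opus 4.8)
The plan is to study the ordinary generating function $F(x) = \sum_{n\ge 1} f(n)\,x^n$ through the location and the nature of its dominant singularity. Decomposing a formula according to whether it is the symbol $1$, has a top-level $+$, or has a top-level $\times$ — and noting that $1$ is the \emph{only} formula evaluating to $1$ (an induction on size: in a product both factors are $\ge 2$ because multiplication by $1$ is forbidden) — yields the functional equation
\begin{equation*}
F(x) \;=\; x + F(x)^2 + P(x), \qquad P(x) \;:=\; \sum_{i,j\ge 2} f(i)\,f(j)\,x^{ij} .
\end{equation*}
A purely additive formula gives $f(n)\ge C_{n-1}$, so the radius of convergence $R$ of $F$ satisfies $R\le \tfrac14$; conversely every formula for $n$ uses at most $n$ ones (induction, using $a+b\le ab$ for $a,b\ge 2$), and a formula with $s$ ones is a binary tree with $s$ leaves together with a $\{+,\times\}$-labelling of its $s-1$ internal nodes, so $f(n)\le \sum_{s\le n} C_{s-1}2^{s-1}\le 8^{\,n}$ and $R\ge \tfrac18$. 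In particular $R\in(0,1)$, and $P$, having non-negative coefficients, satisfies $|P(x)|\le P(|x|)$ and is increasing on $(0,R)$.

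The crucial observation is that the multiplicative part $P$, although defined from $F$, is analytic on the disc $|x|<R^{1/2}$, which \emph{strictly contains} $\overline{D(0,R)}$ since $R<1$. Indeed $[x^n]P=\sum_{d\mid n,\ 2\le d\le n/2}f(d)f(n/d)$, and on the interval $[2,n/2]$ the exponent $d+n/d$ attains its maximum at the endpoints, so $[x^n]P\ll_\gamma n\,\gamma^{\,n/2+2}$ for every $\gamma>1/R$, whence $\limsup_n([x^n]P)^{1/n}\le R^{-1/2}$. Thus $x\mapsto x+P(x)$ is analytic at $x=R$. Now on $[0,R)$ the identity $F^2-F+(x+P)=0$ holds with $F$ real, so $x+P(x)\le\tfrac14$ there, and letting $x\to R^-$ gives $R+P(R)\le\tfrac14$; if this inequality were strict, then $1-4(x+P(x))$ would be analytic and positive near $R$, making
\begin{equation*}
F(x) \;=\; \tfrac12\Bigl(1-\sqrt{\,1-4\bigl(x+P(x)\bigr)}\,\Bigr)
\end{equation*}
analytic at $R$, contradicting Pringsheim's theorem (which, since $f\ge 0$, forces a singularity of $F$ at $R$). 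Hence $R+P(R)=\tfrac14$, and since $P(R)\ge f(2)^2R^4>0$ we conclude $R<\tfrac14$, i.e.\ $\rho:=1/R>4$. (The displayed representation of $F$ then holds on all of $D(0,R)$, the branch being pinned down by $F(0)=0$ and by $|x+P(x)|\le|x|+P(|x|)<R+P(R)=\tfrac14$ there.)

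To finish by singularity analysis one checks that $R$ is the \emph{unique} singularity of $F$ on $|x|=R$: if $|x|=R$ and $x+P(x)=\tfrac14$, equality must hold throughout $|x+P(x)|\le|x|+\sum_n([x^n]P)\,R^n$, forcing $n\arg x\equiv c_0\ (\mathrm{mod}\ 2\pi)$ for every $n$ with $[x^n]P>0$; applying this with $n\in\{4,6,9\}$ (where $[x^n]P$ equals $f(2)^2$, $2f(2)f(3)$, $f(3)^2$, all positive) forces $\arg x=0$. The radicand has a simple zero at $R$ (its derivative there is $-4(1+P'(R))$ with $P'(R)>0$), so $\sqrt{\,1-4(x+P(x))}$, and hence $F$, continues analytically to a $\Delta$-domain at $R$, where
\begin{equation*}
F(x) \;=\; \tfrac12 - \sqrt{\,R\bigl(1+P'(R)\bigr)}\;\bigl(1-x/R\bigr)^{1/2} + O\bigl((1-x/R)^{3/2}\bigr).
\end{equation*}
The Flajolet--Odlyzko transfer theorem then gives $f(n)=[x^n]F\sim c\,\rho^n n^{-3/2}$ with $c=\tfrac{1}{2\sqrt{\pi}}\sqrt{R(1+P'(R))}$. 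The stated decimals follow numerically: generate $f(1),\dots,f(N)$ from the recursion $f(n)=\sum_{i+j=n}f(i)f(j)+\sum_{ij=n,\ i,j\ge 2}f(i)f(j)$, solve $R+P(R)=\tfrac14$ for $R$ (the tail of $P$ is harmless, $([x^n]P)R^n\ll n\,\rho^{-n/2}$), and evaluate $c$.

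The heart of the matter — and the step I expect to be the real obstacle — is recognising that $P$ extends analytically past $R$; once that is in hand the problem reduces to a quadratic with analytic coefficients and a classical square-root branch point, and the asymptotic shape $c\rho^n n^{-3/2}$ together with $\rho>4$ drops out. The remaining technical burden is the verification of the $\Delta$-domain hypotheses (uniqueness of the dominant singularity and the analytic continuation), for which the positivity of the coefficients of $P$ and the small arithmetic input $n\in\{4,6,9\}$ are exactly what is needed.
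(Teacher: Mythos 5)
Your proof is correct and follows essentially the same route as the paper: the same functional equation $F = F^2 + (x+P)$ with $x+P(x)$ equal to the paper's $\widetilde F$, the same observation that the multiplicative part is analytic in $|x|<R^{1/2}\supset\overline{D(0,R)}$, Pringsheim to pin the singularity at the root of $\widetilde F=\tfrac14$, and a square-root singularity analysis (you use Flajolet--Odlyzko transfer where the paper uses Darboux, and your $\{4,6,9\}$ argument for uniqueness of the dominant singularity replaces the paper's direct inequality $|\widetilde F(re^{i\theta})|<\widetilde F(r)$; both are routine variants). No gaps.
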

In addition our method gives an asymptotic expansion for $f(n)$. We refer
the reader to the proof of Theorem~\ref{thm:f_asymp} for more details. 
Theorem~\ref{thm:f_asymp} is also motivated by some relations with the factoring problem, see
Section~\ref{sec:factor}. 

We obtain a completely explicit characterization of the constant $\rho$. 
It is determined as $\rho := 1/\xi$ where $0 < \xi < 1/4$ is the smallest positive solution to the equation $\widetilde{F}(\xi) = 1 / 4$, with
\begin{equation*}
\widetilde{F}(z) := z + \sum_{d = 2}^\infty f(d) (F(z^d) - z^d) \text{ and } F(z) := \sum_{n = 1}^\infty f(n) z^n .
\end{equation*}
The proof of Theorem~\ref{thm:f_asymp} can be easily adapted to count the number of arithmetic formulas in which also exponentiation is allowed (and such that $1$ is never an argument of exponentiation). We call such formulas \textit{arithmetic exponential formulas}. An analogue of Theorem~\ref{thm:f_asymp} holds for counting arithmetic exponential formulas but with a larger $\rho = 4.13073529514801\ldots$. 

The proof of Theorem~\ref{thm:f_asymp} depends on generating functions and complex analysis.
A natural idea is to produce an elementary proof of Theorem~\ref{thm:f_asymp}  by first
asking for the number $f_k(n)$ of arithmetic formulas for $n$ using only addition and exactly $k$ multiplication operations. This we achieve in the theorem below.

\begin{thm}\label{thm:fk_rude_asymp}
For all integers $k \geq 0$, we have
\begin{equation*}
f_k(n) \sim \frac{\sigma^k}{4 \sqrt{\pi} \, k!}\, 4^n n^{k-3/2},
\end{equation*}
as $n \to +\infty$, where
\begin{equation*}
\sigma := \sum_{m=1}^\infty \frac1{4^{m-1}} \! \sum_{\substack{d \, \mid \, m \\ 1 < d < m}} f_0(d) f_0(m/d) .
\end{equation*}
\end{thm}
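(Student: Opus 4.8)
The plan is to translate the count into a generating-function identity and then apply singularity analysis. Put $F_k(z) := \sum_{n \ge 1} f_k(n) z^n$. Decomposing an arithmetic formula for $n$ according to whether it is the symbol $1$, a sum $A + B$, or a product $A \times B$ with $A \neq 1$ and $B \neq 1$, one obtains $F_0(z) = z + F_0(z)^2$, hence $F_0(z) = \tfrac12\bigl(1 - \sqrt{1-4z}\bigr)$ — so that $f_0(n) = C_{n-1}$ and, crucially, $1 - 2F_0(z) = \sqrt{1-4z}$ — and, for every $k \ge 1$,
\[
F_k(z) = \sum_{j=0}^{k} F_j(z) F_{k-j}(z) + G_k(z), \qquad G_k(z) := \sum_{a \ge 2}\ \sum_{b \ge 2}\ \sum_{j=0}^{k-1} f_j(a) f_{k-1-j}(b)\, z^{ab} .
\]
Isolating the terms $j \in \{0, k\}$ (which together give $2 F_0 F_k$) gives $F_k(z) = R_k(z)\,(1-4z)^{-1/2}$, where $R_k(z) := \sum_{j=1}^{k-1} F_j(z) F_{k-j}(z) + G_k(z)$.

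I would then prove, by induction on $k \ge 1$, that $F_k(z) = A_k(z)\,(1-4z)^{1/2-k}$, where $A_k$ is holomorphic on $\{|z| < 1/\sqrt 8\}$ and $A_k(\tfrac14) = a_k$, the constants $a_k$ being defined by $a_1 := \sigma/4$ and $a_k := \sum_{j=1}^{k-1} a_j a_{k-j}$ for $k \ge 2$. Plugging the inductive hypothesis for $j < k$ into the formula for $R_k$ and dividing by $(1-4z)^{1/2}$ gives $A_k = \sum_{j=1}^{k-1} A_j A_{k-j} + (1-4z)^{k-1} G_k$; for $k \ge 2$ the factor $(1-4z)^{k-1}$ is a polynomial vanishing at $z = \tfrac14$, so $A_k$ is holomorphic on $\{|z| < 1/\sqrt 8\}$ with $A_k(\tfrac14) = a_k$. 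For the base case $k = 1$ one has $A_1 = G_1$ and
\[
A_1(\tfrac14) = \sum_{a \ge 2}\sum_{b \ge 2} f_0(a) f_0(b)\, 4^{-ab} = \sum_{m \ge 1} 4^{-m}\!\!\!\sum_{\substack{d \,\mid\, m \\ 1 < d < m}}\!\!\! f_0(d) f_0(m/d) = \frac{\sigma}{4} .
\]

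I expect the main obstacle to be checking that $G_k$ is holomorphic on $\{|z| < 1/\sqrt 8\}$ — equivalently, that $G_k$ continues analytically beyond $z = \tfrac14$, which is precisely what makes $(1-4z)^{1/2-k}$ the true dominant singularity of $F_k$ (and thereby raises the exponent of $n$ from $-\tfrac32$ up to $k - \tfrac32$). Each summand of $G_k$ is holomorphic there since $|z^{ab}| < 1$, and, on $|z| \le r$ with $r < 1/\sqrt 8$, the crude bound $f(m) \le C_{m-1}\, 2^{m-1} \le 8^{\,m-1}$ on the total number of formulas gives $\sum_{j=0}^{k-1} f_j(a) \le f(a) \le 8^{\,a-1}$ and $\sum_{b \ge 2} 8^{\,b-1} r^{ab} \ll r^{2a}$, so the $a$-th term of $G_k$ is $\ll (8 r^2)^a$, which is summable.

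Finally I would solve the recursion for $a_k$ and apply the transfer theorem. The generating function $\sum_{k \ge 1} a_k t^k$ satisfies $Y = a_1 t + Y^2$, whence
\[
a_k = \Bigl(\tfrac{\sigma}{4}\Bigr)^{\!k} C_{k-1} = \frac{\sigma^k\, \Gamma\!\left(k - \tfrac12\right)}{4\sqrt\pi\, k!} \neq 0
\]
(using $\sigma > 0$). Since $A_k$ is holomorphic at $\tfrac14$, writing $F_k(z) = a_k (1-4z)^{1/2-k} + \bigl(A_k(z) - a_k\bigr)(1-4z)^{1/2-k}$, where the second term is $O\bigl((1-4z)^{3/2-k}\bigr)$ near $z = \tfrac14$, the standard transfer theorem for algebraic singularities yields
\[
f_k(n) \sim a_k\,[z^n]\,(1-4z)^{1/2-k} = \frac{a_k}{\Gamma\!\left(k - \tfrac12\right)}\, 4^n n^{k-3/2} = \frac{\sigma^k}{4\sqrt\pi\, k!}\, 4^n n^{k-3/2} .
\]
The case $k = 0$ is just the asymptotics for the Catalan numbers recalled in the introduction.
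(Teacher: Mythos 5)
Your proof is correct, but it takes a genuinely different route from the paper's. The paper deliberately avoids generating functions for this theorem: it refines $f_k(n)$ according to the \emph{trace} of a formula (the number $p$ of primitive multiplicative nodes together with the multiplication counts $\ell_i, r_i$ in their subtrees), proves the exact convolution identity of Lemma~\ref{lem:fplr_formula}, and extracts the asymptotics of each trace class by elementary $\liminf$/$\limsup$ estimates under strong induction on $k$; Theorem~\ref{thm:fk_rude_asymp} then follows because the trace $(k,\mathbf{0},\mathbf{0})$ dominates, and $\sigma$ appears there as $\sum_{t}(f_0 *^\prime f_0)(t)/4^{t-1}$, exactly as in your computation of $A_1(1/4)=\sigma/4$. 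You instead run through the root decomposition $F_k = 2F_0F_k+\sum_{j=1}^{k-1}F_jF_{k-j}+G_k$, the identity $1-2F_0=\sqrt{1-4z}$, an induction showing $F_k=A_k(z)(1-4z)^{1/2-k}$ with $A_k$ analytic in $|z|<1/\sqrt{8}$ and $A_k(1/4)=(\sigma/4)^k C_{k-1}$, and then singularity analysis (the paper's Lemma~\ref{lem:darboux} applies verbatim to $A_k(z/4)(1-z)^{1/2-k}$ since $1/2-k\notin\{0,1,2,\ldots\}$, so you need not invoke a separate transfer theorem). What each approach buys: yours is shorter and, via Darboux, yields a full asymptotic expansion of $f_k(n)$ in descending powers of $n$; the paper's is elementary (no complex analysis) and proves the strictly finer Theorem~\ref{thm:fk_asymp}, which your method does not directly recover. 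One small slip: the inequality $f(m)\le C_{m-1}2^{m-1}$ is not justified, because a formula for $m$ need not have exactly $m$ leaves (e.g.\ $8=(1+1)\times(1+1)\times(1+1)$ has six); summing over all admissible leaf counts gives $f(m)<8^m$ as in Lemma~\ref{lem:f_upper_bound}, which is all your convergence estimate for $G_k$ requires, since $8r^2<1$ for $r<1/\sqrt{8}$.
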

One would like to sum the above formula over all $k$, assuming
sufficient uniformity, and claim that $\rho$ in Theorem~\ref{thm:f_asymp} is equal to  $4 e^{\sigma}$.
However $\rho < 4 e^{\sigma}$ and therefore for large $k$ there occurs a significant break in the
uniformity of Theorem \ref{thm:fk_rude_asymp}. This is 
expected since, for example,  $f_k(n) = 0$ for $k > \log n/\log 2$. 

\subsection{Factoring}\label{sec:factor}
One motivation for our work comes from factoring. 
For a given positive integer $n$ one would like to understand the following graph $G_n$: 
The nodes of the graph $G_n$ correspond to the various arithmetic formulas for $n$ and an edge is placed between two nodes if one can pass from one 
formula to the other by using only one operation of either associativity, distributivity or commutativity. 

One can depict arithmetic formulas as full binary trees, so that the graph $G_n$ is a graph whose vertices correspond to certain special full binary trees.
Various arithmetic algorithms such as integer factoring algorithms can be depicted as walks starting from some particular vertex of the graph $G_n$ (say the one corresponding to the recursive Horner encoding, see below for a definition of this encoding) and terminating
at a vertex associated with a formula encoding of $n$ whose corresponding tree is rooted at a multiplication node. 


A vertex $v$ of $G_n$ corresponding to an arithmetic formula using only additions has the largest possible degree in $G_n$, precisely $\deg(v) = f_0(n) - 1$.
So in order to understand the connectivity of the graph $G_n$ we compare
$f_0(n) - 1$ to the order of the graph $G_n$. The order of the graph $G_n$
corresponds to the number $f(n)$ of representations of $n$ using only $1$'s
and operation of addition and multiplication. Therefore as an immediate
consequence of Theorem~\ref{thm:f_asymp} we obtain the following
\begin{cor}
Let $C = \rho / 4 = 1.019140446319 \ldots$. 
Then, for some constant $c > 0$, as $n \rightarrow \infty$, 
\begin{equation*}
\max_{v \in G_n} \deg(v) \sim c \cdot \frac{|G_n|}{C^n}.
\end{equation*}
\end{cor}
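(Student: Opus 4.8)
The plan is to assemble the statement directly from Theorem~\ref{thm:f_asymp} together with the two combinatorial facts recorded just above it: that $|G_n| = f(n)$, since by construction the vertices of $G_n$ are exactly the arithmetic formulas evaluating to $n$, and that $\max_{v \in G_n} \deg(v) = f_0(n) - 1$, the degree attained at any vertex arising from a purely additive formula. Both of these may be taken as given.

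First I would pin down the size of $f_0(n)$. A purely additive formula for $n$ is precisely a parenthesization of the sum $1 + 1 + \cdots + 1$ with $n$ summands, so $f_0(n) = C_{n-1}$, the $(n-1)$-st Catalan number. From the Catalan asymptotic quoted in the introduction, $C_m \sim \pi^{-1/2} 4^m m^{-3/2}$, one obtains
\[
\max_{v \in G_n} \deg(v) \;=\; f_0(n) - 1 \;\sim\; \frac{1}{\sqrt{\pi}} \cdot \frac{4^{\,n-1}}{(n-1)^{3/2}} \;\sim\; \frac{1}{4\sqrt{\pi}} \cdot \frac{4^n}{n^{3/2}} .
\]

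Next I would invoke Theorem~\ref{thm:f_asymp} in the form $f(n) \sim c_1 \rho^n n^{-3/2}$, where $c_1 = 0.14569\ldots$ is the constant denoted $c$ in that theorem. Since $C = \rho/4$, this yields
\[
\frac{|G_n|}{C^n} \;=\; \frac{f(n)}{(\rho/4)^n} \;\sim\; \frac{c_1 \rho^n / n^{3/2}}{\rho^n / 4^n} \;=\; c_1 \cdot \frac{4^n}{n^{3/2}} .
\]
Dividing the two displays, the common factor $4^n / n^{3/2}$ cancels, and we get $\max_{v \in G_n}\deg(v) \sim c \cdot |G_n| / C^n$ with $c = 1/(4\sqrt{\pi}\, c_1) > 0$, which is the asserted corollary.

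There is essentially no obstacle here beyond Theorem~\ref{thm:f_asymp} itself: the only inputs are that theorem, the elementary identity $f_0(n) = C_{n-1}$ with its classical asymptotic, and the two structural facts about $G_n$ recorded above (the second of which rests on the observations that associativity and commutativity moves carry a purely additive formula to another purely additive formula, that the induced subgraph on additive formulas is complete, and that distributivity moves cannot be applied in the absence of a multiplication node, so such a vertex has degree exactly $f_0(n)-1$ and this is the largest degree in $G_n$). Everything else is routine manipulation of the $\sim$ relations.
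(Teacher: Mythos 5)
Your proposal is correct and follows exactly the route the paper intends: the paper treats the corollary as an immediate consequence of Theorem~\ref{thm:f_asymp} together with the facts, stated just above the corollary, that $|G_n| = f(n)$ and $\max_{v \in G_n}\deg(v) = f_0(n) - 1$, combined with the Catalan asymptotic of Lemma~\ref{lem:f0_asymp}. Your computation of the constant $c = 1/(4\sqrt{\pi}\,c_1)$ and the cancellation of $4^n/n^{3/2}$ are exactly the routine steps the paper leaves implicit.
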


Of particular interest in the graph $G_n$ are formulas which are short
because they minimize the space needed for encoding $n$.  

\subsection{Shortest encodings} 
We will discuss three special monotone formula encoding schemes
called the \emph{first canonical form} or Goodstein encoding \cite{Goodstein},
the \emph{second canonical form} \cite{EdiZ} and the \emph{Horner encoding}.
We will focus on \textit{arithmetic exponential formulas} (that is, arithmetic formulas allowing exponentiation), because a lower bound for the lengths of such formulas is also a lower bound for the length of the shortest arithmetic formula with only addition and multiplication allowed.

The Goodstein encoding consists in writing the binary expansion of
an integer $n=\sum_{i}2^{a_{i}}$ and recursively writing down the
binary expansion for each integer $a_{i}$ until we obtain a representation
of $n$ as formula involving only $2$ and $1$'s, the final step
will consist in replacing each $2$ by $1+1$ thereby obtaining a
monotone formula encoding of $n$ which only uses additions ($+$)
and exponentiations ($\wedge$) gates and has input 1. For
example the Goodstein encoding for the number $31$ corresponds to
\begin{equation*}
31=\left(1+1\right)^{\left(\left(1+1\right)^{\left(1+1\right)}\right)}+\left(\left(1+1\right)^{\left(\left(1+1\right)+1\right)}+\left(\left(1+1\right)^{\left(1+1\right)}+\left(\left(1+1\right)+1\right)\right)\right) .
\end{equation*}
By contrast to the Goodstein encoding, the second canonical form of
an integer $n$ is slightly more intricate. We start by writing down
the prime factorization $n=p_{1}^{\alpha_{1}}\cdots p_{r}^{\alpha_{r}}$
and subsequently we express each prime as $1+\left(p_{i}-1\right)$.
Finally we recursively apply this scheme to every $\left(p_{i}-1\right)$
and every exponent $\alpha_{i}$. Thus we obtain a monotone formula
encoding for $n$ which uses a combination of addition ($+$), multiplication
($\times$), and exponentiation gates ($\wedge$) and input
restricted to $1$. As an example we express the second canonical form
associated to $2430$ 
\begin{equation*}
2430=\left(\left(1+1\right)\times\left(1+\left(1+1\right)\right)^{\left(1+\left(1+1\right)^{\left(1+1\right)}\right)}\right)\times\left(1+\left(1+1\right)^{\left(1+1\right)}\right) .
\end{equation*}
In \cite{EdiZ} it was observed that for most integers $n$ the second canonical
form is smaller than the Goodstein encoding. Our next result provides
some theoretical validation for this empirical observations. Let $S_{\mathrm{short}}\left(n\right)$
denote the length of the shortest monotone formula encoding of $n$,
let $S_{\mathrm{SCF}}\left(n\right)$ and $S_{\mathrm{FCF}}\left(n\right)$
denote respectively the size of the first and second canonical form
encoding of $n$. The special interest in formula sizes stems from
the connection between circuit complexity and integers encoding schemes.
Building on a sequence of constructions by Cheng \cite{Cheng} and Koiran \cite{Koiran},
Burgisser \cite{Burgisser} showed that if the sequence of integers $\left\{ n!\right\} _{n \in \N}$
is hard to compute, then any algebraic circuits for computing the
permanent of a sequence $\left\{ M_n \in \Z^{n\times n}\right\} _{n \in \N}$
of matrices using addition ($+$) and multiplication ($\times$) gates
with input restricted to $\left\{ -1,\,1\right\} $, must have superpolynomial
size.\\ Also related results (for example, for circuits) have
been obtained in \cite{Moreira} and \cite{Svaiter}. We
refer the reader to the references there-in for further information on this
topic. 

\begin{thm}\label{thm:S_short_bound}
For almost all positive integers $n$ we have
\begin{equation*}
S_{\mathrm{short}}\left(n\right) \geq \frac{\log n}{\log 4} .
\end{equation*}
Precisely, given $\varepsilon > 0$, the number of integers $n \leq x$ such that
\begin{equation*}
S_{\mathrm{short}}\left(n\right) \leq (1 - \varepsilon) \frac{\log n}{\log 4}
\end{equation*}
is $O(x^{1-\varepsilon})$, as $x \to +\infty$.
\end{thm}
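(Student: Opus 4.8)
The plan is to bound the number of integers $n \le x$ that admit a short monotone formula encoding by counting the formulas themselves, i.e. to run a union bound over all monotone arithmetic (exponential) formulas of bounded size. Fix a notion of size: say the \emph{length} $S(\Phi)$ of a monotone formula $\Phi$ counts the number of leaves (each leaf being a $1$), so that a formula of length $s$ corresponds to a full binary tree with $s$ leaves together with a labelling of the $s-1$ internal nodes by one of the operations $\{+,\times,\wedge\}$. The number of such formulas of length exactly $s$ is therefore at most $C_{s-1} \cdot 3^{s-1}$, where $C_{s-1}$ is the Catalan number counting the tree shapes; since $C_{s-1} \le 4^{s-1}$, the number of monotone formulas of length at most $s$ is $O(12^{s})$. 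This crude bound is the wrong constant, however: to get the base $4$ in the theorem one must exploit that a formula evaluating to a value $\le x$ cannot waste leaves. The key observation is that \emph{addition-only} formulas of length $s$ evaluate to exactly $s$, so they contribute at most one integer each; the multiplicative and exponential structure is what lets a formula of length $s$ reach values as large as (roughly) a tower, but such formulas are rare.

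The main step is a counting lemma: the number of values taken by monotone arithmetic exponential formulas of length at most $s$ is $O(4^{s})$, or more precisely $O(4^{s(1-\delta)})$ off a negligible set, which is exactly what is needed. I would prove this by induction on $s$ via the recursive structure of the generating function, mirroring the function $\widetilde F$ appearing before Theorem~\ref{thm:f_asymp}. Writing $g(s)$ for the number of distinct integers expressible by a monotone formula of length exactly $s$, a formula is either a leaf, or $\Phi_1 \star \Phi_2$ with $\star \in \{+,\times,\wedge\}$ and $\mathrm{length}(\Phi_1)+\mathrm{length}(\Phi_2) = s$. The additive case contributes a single integer (namely $s$) when both subformulas are addition-only, and otherwise the product of the subformula counts; the crucial point, as in the definition of $\widetilde F$, is that in a product $\Phi_1 \times \Phi_2$ with both factors $>1$ we may assume $\mathrm{length}(\Phi_1) \le \mathrm{length}(\Phi_2)$, and in an exponentiation $\Phi_1^{\Phi_2}$ the value already exceeds $2^{\mathrm{length}(\Phi_2)}$ while costing only $\mathrm{length}(\Phi_1) + \mathrm{length}(\Phi_2)$ leaves, so deep exponential nesting forces the represented integer to be enormous relative to $s$ — hence such $n \le x$ are very few. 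Collecting terms, one gets $\sum_{s} g(s) z^s$ dominated by a series with radius of convergence $1/4$, giving $\sum_{s \le s_0} g(s) = O(4^{s_0})$, and a quantitative refinement of the same estimate controls the exceptional set.

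From the counting lemma the theorem follows by a direct union bound. Set $s_0 = (1-\varepsilon)\tfrac{\log x}{\log 4}$. The number of integers $n \le x$ with $S_{\mathrm{short}}(n) \le s_0$ is at most the number of distinct values of monotone formulas of length $\le s_0$, which by the lemma is $O(4^{s_0}) = O(x^{1-\varepsilon})$; since also $S_{\mathrm{short}}(n) \le \log n / \log 4 \le \log x/\log 4$ for the relevant comparison, substituting $n$ for $x$ in the threshold only changes the estimate by $O(1)$ factors. This proves both the "almost all" statement and the quantitative bound $O(x^{1-\varepsilon})$.

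The hard part is the counting lemma, and specifically controlling the exponentiation gate: a naive union bound over labelled trees gives base $12$, not $4$, so one must argue that almost all of those formulas either evaluate to the same integer as a shorter formula (collisions, handled by passing to distinct values $g(s)$ rather than formula counts) or evaluate to an integer far larger than $4^{s}$ (handled by the growth estimate for nested exponentials). Making the bookkeeping of these two reductions precise — in particular showing that after removing the "explosive" exponential formulas the remaining ones are genuinely governed by a generating function with radius of convergence $1/4$ — is the technical core, and is where the specific shape of $\widetilde F$ (summing $f(d)(F(z^d)-z^d)$ over $d \ge 2$, which is exactly the "both factors nontrivial" bookkeeping) is the right model to imitate.
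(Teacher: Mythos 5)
There is a genuine gap, and it originates in your choice of the size function. The paper's Definition of ``length'' (stated in Section 2) is the \emph{number of nodes} of the tree, equivalently the number of symbols $1$, $+$, $\times$, $\wedge$ in Polish notation --- not the number of leaves. With that definition the theorem follows from exactly the ``crude'' union bound you dismiss: a formula of length $k$ is determined by its Polish-notation string, which is a word of length $k$ over a $4$-letter alphabet, so there are at most $4^k$ formulas of length $k$; distinct integers have distinct shortest formulas; hence the number of $n \le x$ with $S_{\mathrm{short}}(n) < (1-\varepsilon)\log n/\log 4$ is at most $\sum_{k < (1-\varepsilon)\log x/\log 4} 4^k = O\big(x^{1-\varepsilon}\big)$. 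The base $4$ in the theorem is the alphabet size, and has nothing to do with the values the formulas attain. Indeed, even in your leaf-count convention no refinement is needed: a tree with $s$ leaves has $2s-1$ nodes, so the threshold $\frac{\log n}{\log 4}$ on node-count translates to roughly $\frac{\log n}{\log 16}$ on leaf-count, and your own bound $C_{s-1}3^{s-1} \le 12^{s}$ already beats $16^{s}$. Your perception that the naive count gives ``the wrong constant'' comes entirely from comparing a leaf-count threshold against a node-count theorem.

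As a consequence, the elaborate machinery you propose --- the lemma that the number of \emph{distinct values} of formulas of length at most $s$ is $O(4^s)$, the collision analysis, the growth estimates for nested exponentials, and the generating-function argument modeled on $\widetilde F$ --- is not needed, and moreover is not actually carried out: you explicitly defer the ``technical core'' (controlling the exponentiation gate and the exceptional set) to an unproven refinement. So even taken on its own terms the proposal is incomplete, and the claimed counting lemma would require substantial work to justify. The repair is simple: adopt the paper's definition of length, run the union bound over Polish-notation strings, and the proof is three lines.
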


\begin{thm}\label{thm:S_FCF_asymp}
Given $\varepsilon > 0$, for almost all positive integers $n$,
\begin{equation*}
S_{\mathrm{FCF}}(n) \geq \left(\frac{1}{4(\log 2)^2} - \varepsilon\right) \cdot \log n \log\log n .
\end{equation*}
\end{thm}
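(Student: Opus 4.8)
The plan is to get a lower bound on $S_{\mathrm{FCF}}(n)$ by understanding how the second canonical form recursively unwinds the prime factorization of $n$, and to show that for almost all $n$ this recursion is forced to be expensive. The key observation is that the size of the FCF encoding of $n = p_1^{\alpha_1} \cdots p_r^{\alpha_r}$ is at least the sum of the sizes of the FCF encodings of $p_1, \ldots, p_r$, and the FCF encoding of a prime $p$ writes $p = 1 + (p-1)$ and then recurses on $p - 1$. So the dominant contribution will come from the \emph{largest prime factor} $P(n)$ of $n$: we will have roughly $S_{\mathrm{FCF}}(n) \geq S_{\mathrm{FCF}}(P(n)) \geq S_{\mathrm{FCF}}(P(n) - 1) + O(1)$, and we iterate. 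The first step is therefore to prove a clean recursive lower bound of the shape $S_{\mathrm{FCF}}(n) \geq \sum_{i} S_{\mathrm{FCF}}(p_i) + (\text{nonnegative exponent terms})$ and $S_{\mathrm{FCF}}(p) \geq S_{\mathrm{FCF}}(p-1) + c$ for an absolute constant $c > 0$, just by counting gates in the definition of the second canonical form.

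The second step is to unwind this recursion along the ``largest prime factor chain.'' Starting from $n$, set $n_0 = n$ and $n_{j+1} = P(n_j) - 1$ (stopping when we reach a bounded number). Each step through a prime contributes a bounded number of gates, so $S_{\mathrm{FCF}}(n) \gg (\text{length of the chain}) + \sum_j (\text{extra cost coming from the other prime factors of } n_j)$. To beat a merely logarithmic bound and reach $\log n \log \log n$, the point is that this chain is typically long \emph{and} the numbers $n_j$ along it are typically not powers of $2$, so each $n_j$ itself has a large prime factor that must be encoded, forcing a branching recursion. Quantitatively, one expects $P(m) \approx m^{1 - 1/u}$ on average in a suitable sense, so iterating $P(\cdot) - 1$ decreases $\log$ by a bounded factor at each stage, giving a chain of length $\gg \log n$; and at a positive proportion of the stages the cofactor $m / P(m)$ contributes an additional $\gg \log\log(\text{something})$ gates on average. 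Tracking the constant, $1/(4(\log 2)^2)$ should emerge from the interplay between the base-$2$ (Goodstein-type) encoding used at the bottom of each recursion (the factor $(\log 2)^{-1}$ appears because writing a number in binary costs about $\log_2$ gates, and it appears \emph{squared} because we recurse on exponents which are themselves about $\log_2$ of the previous level) and a factor of $1/4$ coming from an Erd\H{o}s–Kac / Dickman-type averaging over the distribution of prime factors.

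The third step is to make the ``for almost all $n$'' precise, which is where I expect the main obstacle to lie. One must show that the set of $n \leq x$ for which the largest-prime-factor chain is short, or for which too few stages have a large secondary prime factor, has density tending to $0$. This requires quantitative control on the distribution of $P(n)$ and on the joint behavior of prime factors of $n$, $P(n) - 1$, $P(P(n)-1) - 1$, etc. — i.e. one needs that shifted-prime arguments like $P(n) - 1$ still behave, for almost all $n$, like a random integer of their size as far as their largest prime factor is concerned. Results of the type $\Psi(x, x^{1/u}) = \rho(u) x (1 + o(1))$ (Dickman) together with a Turán–Kubilius-style second moment estimate for $\sum_{p \mid n} 1$ restricted to large primes, applied iteratively, should suffice; the care needed is in the iteration, since at each level we are conditioning on an event from the previous level. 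I would first prove the bound unconditionally along the chain (Steps 1–2), then absorb the exceptional sets into an $O(x^{o(1)})$ — or at least $o(x)$ — count by a union bound over the $O(\log x)$ levels, which is the step most likely to require a delicate argument to keep the error terms under control and to pin down the exact constant $1/(4(\log 2)^2)$ rather than just a constant.
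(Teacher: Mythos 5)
There is a genuine gap, and it begins at the very first line: you are lower-bounding the wrong encoding. Despite the garbled ``respectively'' in the introduction, $S_{\mathrm{FCF}}$ in Theorem~\ref{thm:S_FCF_asymp} is the \emph{first} canonical form, i.e.\ the Goodstein/binary encoding, not the prime-factorization-based second canonical form that your argument unwinds. This is not a cosmetic point: the second canonical form satisfies the universal upper bound $S_{\mathrm{SCF}}(n)\le 6\log n/\log 2$ (Theorem~\ref{thm:S_SCF_bound}), so no completion of your largest-prime-factor-chain argument can yield a lower bound of order $\log n\log\log n$ for that encoding, for almost all $n$ or otherwise. Even read on its own terms, the plan is far from a proof: the chain-length claim is off (since $\log P(m)\asymp\log m$ on average, iterating $m\mapsto P(m)-1$ reaches bounded size in $O(\log\log n)$ steps, not $\gg\log n$); the shifted-prime and iterated-conditioning estimates are only wished for; and the constant $\tfrac{1}{4(\log 2)^2}$ is attributed to a Dickman-type average rather than derived.

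The paper's proof is elementary and involves no prime factors at all. Writing $n=\sum_j d_j(n)2^j$, the Goodstein encoding recursively encodes every exponent $j$ with $d_j(n)=1$, whence $S_{\mathrm{FCF}}(n)\ge\sum_{d_j(n)=1}S_{\mathrm{FCF}}(j)\ge\sum_{d_j(n)=1}s_2(j)$, where $s_2$ denotes the binary digit sum. By the weak law of large numbers (Lemma~\ref{lem:weak}), almost all $n\le x$ have at least $(\tfrac12-\varepsilon)\lb x$ nonzero binary digits, and almost all of the corresponding exponents $j\le\lb x$ satisfy $s_2(j)\ge(\tfrac12-\varepsilon)\lb\lb x$. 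The two factors of $\tfrac12$ give the $\tfrac14$, and converting $\lb n\,\lb\lb n$ to natural logarithms gives the $(\log 2)^{-2}$. Your heuristic that the $(\log 2)^{-2}$ comes from two nested levels of binary recursion is the one correct intuition in the proposal; the $\tfrac14$ is the squared density of $1$-bits, not an Erd\H{o}s--Kac constant.
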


\begin{thm}\label{thm:S_SCF_bound}
For all integers $n \geq 2$, we have
\begin{equation*}
S_{\mathrm{SCF}}\left(n\right)\le 6 \, \frac{\log n}{\log 2} .
\end{equation*}
\end{thm}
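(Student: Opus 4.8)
The plan is to control the number $\ell(n)$ of $1$'s occurring in the second canonical form of $n$, since every natural measure of size --- the number of $1$'s, the number of operation gates, or the total number of nodes of the associated binary tree --- is at most $2\ell(n)$. So it suffices to prove the sharper bound $\ell(n) \le 3\log_2 n$ for all integers $n \ge 2$, from which $S_{\mathrm{SCF}}(n) \le 6\log n/\log 2$ follows with room to spare. I would argue by strong induction on $n$, which is legitimate because the recursion defining the second canonical form always passes to strictly smaller positive integers. In terms of $\ell$, that recursion reads: if $n = p_1^{\alpha_1}\cdots p_r^{\alpha_r}$ with the $p_i$ distinct primes, then $\ell(n) = \sum_{i=1}^{r}\ell(p_i^{\alpha_i})$, where $\ell(p^{\alpha}) = \ell(p)$ if $\alpha = 1$ and $\ell(p^{\alpha}) = \ell(p) + \ell(\alpha)$ if $\alpha \ge 2$, and where $\ell(p) = 1 + \ell(p - 1)$ for $p$ prime and $\ell(1) = 1$.

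The base case $n = 2$ is immediate, as $\ell(2) = 2 \le 3$. For the inductive step, first suppose $n$ is composite: then each prime factor $p_i$ satisfies $p_i \le n/2 < n$ and each exponent satisfies $\alpha_i \le \log_2 n < n$, so the induction hypothesis gives $\ell(p_i) \le 3\log_2 p_i$ and, when $\alpha_i \ge 2$, $\ell(\alpha_i) \le 3\log_2\alpha_i$; combined with $3(\alpha - 1)\log_2 p \ge 3(\alpha - 1) \ge 3\log_2\alpha$ for integers $\alpha \ge 2$, $p \ge 2$, this gives $\ell(p_i^{\alpha_i}) \le 3\alpha_i\log_2 p_i$ for each $i$, and summing yields $\ell(n) \le 3\sum_i \alpha_i \log_2 p_i = 3\log_2 n$. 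The substantive case is $n = p$ prime, where $\ell(p) = 1 + \ell(p - 1)$; here the crude bound $\ell(p-1) \le 3\log_2(p-1)$ is useless, since $\log_2 p - \log_2(p-1) \to 0$ leaves no room to absorb the extra $+1$. The remedy is to use that $p - 1$ is even: write $p - 1 = 2^{a}m$ with $a \ge 1$ and $m$ odd. Then $\ell(p-1) = \ell(2^{a})$ if $m = 1$ and $\ell(p-1) = \ell(2^{a}) + \ell(m)$ if $m \ge 3$; moreover $\ell(2^1) = 2$, while for $a \ge 2$ the induction hypothesis gives $\ell(2^{a}) = 2 + \ell(a) \le 2 + 3\log_2 a$, so $\ell(2^{a}) \le 2 + 3\log_2 a$ in all cases, and likewise $\ell(m) \le 3\log_2 m$ when $m \ge 3$. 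Hence $\ell(p-1) \le 2 + 3\log_2 a + 3\log_2 m$ (reading $\log_2 1 = 0$ when $m=1$), so that
\begin{equation*}
\ell(p) \;\le\; 3 + 3\log_2 a + 3\log_2 m \;=\; 3 + 3\log_2(am) .
\end{equation*}
Since $am = (p-1)\cdot a/2^{a}$, we have $\log_2(am) = \log_2(p-1) - (a - \log_2 a) \le \log_2(p-1) - 1 < \log_2 p - 1$ by the elementary inequality $a - \log_2 a \ge 1$, valid for every integer $a \ge 1$. Therefore $\ell(p) \le 3 + 3(\log_2 p - 1) = 3\log_2 p$, completing the induction.

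The one genuinely delicate point is the prime case, and the crux is to refuse to treat $\ell(p-1)$ as a black box and instead split off the exact power of $2$ dividing $p-1$: the inequality $a - \log_2 a \ge 1$ is precisely what lets that power of $2$ pay for the stray $+1$ coming from $p = 1 + (p-1)$. One sees the constant $3$ is essentially optimal from a Cunningham chain $p_{j+1} = 2p_j + 1$, along which $\ell(p_{j+1}) = \ell(p_j) + 3$ while $\log_2 p_{j+1} = \log_2 p_j + 1 + o(1)$. Everything else is routine: the composite case, the base case, the two elementary inequalities $a - \log_2 a \ge 1$ and $\alpha - 1 \ge \log_2\alpha$ (each equivalent to $2k \le 2^{k}$), and the observation that $\ell(1) = 1$ enters only through $\ell(2) = 1 + \ell(1) = 2$, so that no separate treatment of $n = 1$ is required.
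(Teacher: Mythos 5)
Your proof is correct, and it takes a genuinely different route from the paper's. The paper introduces the intermediate quantity $t(n)$, the number of occurrences of $(1+1)$ in the second canonical form, and splits the work into two clean halves: a purely combinatorial induction showing $S_{\mathrm{SCF}}(n) \le 6t(n) - 1$ (each ``$2$'' costs at most six symbols), and the arithmetic bound $t(n) \le \log n/\log 2$, which follows in one line from repeatedly applying $2^y \ge 2y$ to see that $n \ge 2^{t(n)}$. That second step is where the paper avoids all delicacy at primes: passing from $p-1$ to $p = 1 + (p-1)$ adds no new $2$'s, so the ``stray $+1$'' is free in the count of $t$. You instead run a single induction directly on the number of $1$'s, $\ell(n) \le 3\log_2 n$ (equivalent to the theorem since $S_{\mathrm{SCF}}(n) = 2\ell(n) - 1$), and you must then confront the $+1$ at primes head-on; your resolution --- splitting $p - 1 = 2^a m$ and using $a - \log_2 a \ge 1$ so that the exact power of $2$ dividing $p-1$ pays for the increment --- is the real content of your argument and is correct in all cases, including $m = 1$ and $a = 1$. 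The trade-off is that the paper's factorization through $t(n)$ makes the prime case essentially trivial at the cost of an auxiliary statistic, while your direct induction is self-contained and comes with the bonus observation (via Cunningham chains) that the constant $3$ in $\ell(n) \le 3\log_2 n$, hence the constant $6$ in the theorem, is essentially the limit of this kind of argument.
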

In conclusion, while the first canonical form is rapid it provides
formulas of sub-optimal length compared to the shortest formula. 
The second canonical form is more computationally intensive but gives
rise to shorter formulas, of quality comparable to the shortest formula.
The drawback is computational complexity, and this drawback is alleviated 
by
the Horner encoding, which is obtained from a recursive factoring
of the Goodstein encoding. 
We write below the recursive Horner encoding of the integer $53376$ 
\begin{equation*}
53376 ={\left({\left({\left((1+1) + 1\right)} (1+1)^{(1+1)} + 1\right)} (1+1)^{(1+1)^{(1+1)} + 1} + 1\right)} (1+1)^{{\left((1+1) + 1\right)} (1+1) + 1} 
\end{equation*}
The properties of the recursive Horner encoding are similar to the second canonical form. 
For example we obtain essentially the same results for $S_{\mathrm{Hor}}(n)$ as for
$S_{\mathrm{SCF}}(n)$. We suspect however that the second canonical form gives
on average slightly shorter formulas than the Horner encoding. We think it's
an interesting question but we didn't pursue it.
Finally we note that one can efficiently recover
recursive Horner encodings from Goodstein encodings.

\subsection*{Notation}
Hereafter, $\N$ denotes the set of positive integers and $\N_0 := \N \cup \{0\}$.
We use the Landau--Bachmann $o$ and $O$ symbols, as well as Vinogradov’s $\ll$ notation, with their usual meanings.
We adopt the usual convention that empty sums and empty products, e.g.\ $\sum_{n=x}^y$ and $\prod_{n=x}^y$ with $x > y$, have values $0$ and $1$, respectively.
Moreover, we employ the convention that a binomial coefficient $\binom{a}{b} = 0$ if $a < b$.
Finally, if $g$ and $h$ are two arithmetic functions, we write $g *^\prime h$ for their \emph{proper} Dirichlet convolution (cf.~\cite[Ch.~2]{Hua09}), i.e., the function defined by
\begin{equation*}
(g *^\prime h)(n) := \sum_{\substack{d \, \mid \, n \\ 1 < d < n}} g(d) \, h(n/d), \quad n \in \N ,
\end{equation*}
where the sum runs over all the proper divisors $d$ of $n$.

\section{Preliminaries}

First of all, we need a rigorous formal definition of what arithmetic formulas are.

\begin{defi}
Let $n$ be a positive integer. 
An \emph{arithmetic formula} $A$ for $n$ is an \mbox{$\N$-valued} $\{+,\times\}$-labeled full binary tree such that:
\begin{enumerate}[(i).]
\item The value of the root is $n$.
\item The value of each leaf is $1$.
\item All node except the leaf nodes are labelled with a $+$ (\emph{additive node}) or $\times$ (\emph{multiplicative node}).
\item The value of each additive node is $a + b$, where $a$ and $b$ are the values of its children.
\item The value of each multiplicative node is $ab$, where $a$ and $b$ are the values of its children.
\item If $a$ and $b$ are the values of the children of a multiplicative node, then $a,b \geq 2$.
\end{enumerate}
Similarly, an \emph{arithmetic exponential formula} $E$ for $n$ is an \mbox{$\N$-valued} $\{+,\times,\wedge\}$-labeled full binary tree that satisfies all the previous points, only with (iii) slightly modified to
\begin{enumerate}[(i).]
\item[(iii').] All nodes except the leaf nodes are labelled with $+$ (\emph{additive node}), $\times$ (\emph{multiplicative node}) or $\wedge$ (\emph{exponential node}).
\end{enumerate}
and furthermore
\begin{enumerate}[(i).]
\item[(vii).] The value of an exponential node is $a^b$, where $a$ and $b$ are the values of its left and right children, respectively.
\item[(viii).] If $a$ and $b$ are the values of the children of an exponential node, then $a,b \geq 2$.
\end{enumerate}
Finally, we say that a multiplicative node of $A$ or $E$ is \emph{primitive} if it has no multiplicative ancestor.
\end{defi}

Now we can also define the length of an arithmetic formula.

\begin{defi}
The \emph{size} or \emph{length} of an arithmetic formula (or an arithmetic exponential formula) $A$ is the number of nodes of $A$;
equivalently, the number of symbols $1$, $+$, $\times$ and $\wedge$ needed to write $A$ in the usual infix notation, or in Polish notation.
Note that parenthesis do not count.
\end{defi}

We state below a frequently used 
immediate consequence of Stirling's formula.

\begin{lem}\label{lem:f0_asymp}
We have,
\begin{equation*}
f_0(n) = C_{n-1} \sim \frac1{4\sqrt{\pi}}\frac{4^n}{n^{3/2}} ,
\end{equation*}
as $n \to +\infty$.
\end{lem}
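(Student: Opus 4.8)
The plan is to separate the claim into the combinatorial identity $f_0(n) = C_{n-1}$ and the asymptotic evaluation of the Catalan number, and to dispatch each in turn.

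For the identity, I would observe that an arithmetic formula for $n$ using only addition is precisely a full binary tree all of whose leaves have value $1$ and all of whose internal nodes are additive; since an additive node returns the sum of the values of its two children, the value of the root of such a tree equals its number of leaves. Hence arithmetic formulas for $n$ using only addition correspond bijectively to full binary trees with $n$ leaves, equivalently with $n-1$ internal nodes, equivalently to the ways of inserting parentheses in the sum $1 + 1 + \cdots + 1$ with $n$ summands. The number of such objects is the Catalan number $C_{n-1}$ by \cite[Ch.~6, Corollary~6.2.3]{Sta01}, so $f_0(n) = C_{n-1}$.

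For the asymptotic, I would write $C_{n-1} = \tfrac1n \binom{2n-2}{n-1} = \tfrac{(2n-2)!}{n\,((n-1)!)^2}$ and apply Stirling's formula $m! = \sqrt{2\pi m}\,(m/e)^m(1+o(1))$ to the three factorials; this gives $\binom{2n-2}{n-1} \sim 4^{n-1}/\sqrt{\pi(n-1)} \sim 4^{n-1}/\sqrt{\pi n}$, and therefore $C_{n-1} \sim 4^{n-1}/(\sqrt{\pi}\,n^{3/2}) = \tfrac{1}{4\sqrt{\pi}}\cdot 4^n/n^{3/2}$. Alternatively one may simply quote the standard asymptotic $C_m \sim \pi^{-1/2} 4^m m^{-3/2}$ already recorded in the introduction, substitute $m = n-1$, and use $4^{n-1} = 4^n/4$ together with $(n-1)^{-3/2} \sim n^{-3/2}$.

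No genuine difficulty arises here; the only point requiring a little care is the index shift $m \mapsto n-1$, which is exactly what produces the factor $1/4$ by which the constant $1/(4\sqrt{\pi})$ in the lemma differs from the constant $1/\sqrt{\pi}$ appearing in the usual asymptotic for $C_m$.
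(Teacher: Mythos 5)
Your proof is correct and follows exactly the route the paper intends: the identity $f_0(n) = C_{n-1}$ via the parenthesization/full-binary-tree bijection already noted in the introduction, and the asymptotic via Stirling's formula (the paper states the lemma as an immediate consequence of Stirling and gives no further details). The index shift $m \mapsto n-1$ producing the factor $1/4$ is handled correctly.
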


\section{Proof of Theorem~\ref{thm:f_asymp}}

We start with a couple of lemmas.
For all integers $n \geq 2$, we denote by $f^+(n)$, respectively $f^\times(n)$, the number of arithmetic formulas for $n$ which root node is additive, respectively multiplicative.
We set also $f^+(1) := 1$ and $f^\times(1) := 0$.
Thus, obviously, $f(n) = f^+(n) + f^\times(n)$, for all positive integers $n$.
Moreover, it is easily seen that

\begin{lem}\label{lem:f_recursion}
For all integers $n \geq 2$, it results
\begin{equation*}
f^+(n) = \sum_{h=1}^{n-1} f(n - h) f(h)
\end{equation*}
and $f^\times(n) = (f *^\prime f)(n)$.
\end{lem}

The next lemma is a first upper bound on $f(n)$ which we need to be sure that the radius of convergence of $F(z)$ is positive.

\begin{lem}\label{lem:f_upper_bound}
We have $f(n) < 8^n$, for each positive integer $n$.
\end{lem}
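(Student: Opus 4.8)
The plan is to not induct on the recursion of Lemma~\ref{lem:f_recursion} (that approach runs into trouble, as explained below) but instead to bound $f(n)$ by directly counting labeled full binary trees of bounded size.

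The first step is to show that every arithmetic formula for $n$ has at most $n$ leaves. I would prove, by structural induction on the tree, the stronger statement that the value of every node is at least the number of leaves in the subtree rooted at that node. The leaf case is immediate. At an additive node the values of the two children add, and so do the two leaf counts, so the inequality is preserved. At a multiplicative node with children of values $a,b$ we have $a,b \geq 2$ by condition (vi) of the definition, and then the elementary inequality $ab \geq a+b$ (valid exactly because $a,b\geq 2$) shows that the value $ab$ dominates the sum of the two leaf counts; likewise for exponential nodes in the arithmetic exponential case. Applying this at the root gives that the number of leaves is at most $n$.

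The second step is the count. A full binary tree with $\ell$ leaves has exactly $\ell-1$ internal nodes, and the number of such tree shapes is the Catalan number $C_{\ell-1}$. An arithmetic formula whose underlying tree has $\ell$ leaves is obtained by choosing a shape and labeling each internal node with $+$ or $\times$; most such labelings do not evaluate to $n$ (and some violate (vi)), but for an upper bound we may ignore this. Hence, using $C_{\ell-1}\le\binom{2\ell-2}{\ell-1}\le 4^{\ell-1}$,
\begin{equation*}
f(n)\;\le\;\sum_{\ell=1}^{n} C_{\ell-1}\,2^{\ell-1}\;\le\;\sum_{\ell=1}^{n} 8^{\ell-1}\;=\;\frac{8^{n}-1}{7}\;<\;8^{n},
\end{equation*}
which is the claim.

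The main obstacle is recognizing that the naive route fails: assuming $f(m)<8^m$ for $m<n$ and plugging into $f^+(n)=\sum_{h=1}^{n-1} f(h)f(n-h)$ only yields $f^+(n)\le(n-1)\,8^n$, overshooting the target by a factor of $n$ (the multiplicative part $f^\times(n)=(f*'f)(n)$ is harmless by comparison). The resolution is precisely the leaf-count bound of the first step, which caps the size of any formula for $n$ and thereby replaces the self-referential estimate by a plain geometric sum.
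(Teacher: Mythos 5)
Your proof is correct and is essentially the paper's own argument: both bound the number of internal (non-leaf) nodes of a formula for $n$ by $n-1$, then count tree shapes by Catalan numbers times $2^k$ labelings and sum the resulting geometric series $\sum 8^k < 8^n$. The only difference is that you spell out the leaf-count bound via the induction ``value of a node $\geq$ number of leaves below it,'' which the paper treats as immediate.
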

\begin{proof}
Consider that an arithmetic formula for $n$, thought of as a full binary tree, has at most $n-1$ non-leaf nodes.
For any nonnegative integer $k$ there are exactly $C_k$ full binary trees with $k$ non-leaf nodes.
Given one of them, its non-leaf nodes can be labeled (as additive or multiplicative) in $2^k$ different ways.
In conclusion, since $C_k \leq 4^k$, we get
\begin{equation*}
f(n) \leq \sum_{k=0}^{n-1} 2^k C_k \leq \sum_{k=0}^{n-1} 8^k < 8^n .
\end{equation*}
\end{proof}
As for the analytic input into our proof
we will need the following version of ``Darboux's method''. 
\begin{lem}[Darboux's method]\label{lem:darboux}
Let $v(z)$ be analytic in some disk $|z| \leq 1 + \eta$, and suppose
that in a neighborhood of $z = 1$ it has the expansion $v(z) =
\sum_{j=0}^\infty v_j (1-z)^{j}$. Let $\beta \notin \{0,1,2,\ldots\}$. Then, the
$n$-th coefficient of $(1 - z)^{\beta} v(z)$ is equal to
$$
\sum_{j = 0}^{m} v_j \binom{n - \beta -j - 1}{n} + O(n^{-m-\beta-2}). 
$$
\end{lem}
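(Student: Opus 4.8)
The plan is to split off the singular factor $(1-z)^\beta$ from an analytic part, read the main terms directly off Newton's binomial series, and bound the resulting tail by a convolution estimate; this is the classical argument underlying Darboux's method.

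First I would record the expansion, valid for $|z| < 1$ and any real $\alpha$,
\[
[z^N]\,(1-z)^\alpha \;=\; \binom{\alpha}{N}(-1)^N \;=\; \binom{N-\alpha-1}{N},
\]
together with its consequence, via Stirling's formula, that for $\alpha \notin \{0,1,2,\dots\}$ one has $\binom{N-\alpha-1}{N} = \Gamma(-\alpha)^{-1} N^{-\alpha-1}\bigl(1+O(1/N)\bigr)$ as $N \to \infty$, and that in all cases $\bigl|[z^N](1-z)^\alpha\bigr|$ grows at most polynomially in $N$. Next, set $P_m(z) := \sum_{j=0}^{m} v_j (1-z)^j$ and $w(z) := \bigl(v(z) - P_m(z)\bigr)/(1-z)^{m+1}$. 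Since $v(z) = \sum_{j \ge 0} v_j (1-z)^j$ in a neighbourhood of $z = 1$, the numerator vanishes there to order at least $m+1$, so $w$ has a removable singularity at $z = 1$; and since $v - P_m$ is analytic on $|z| \le 1 + \eta$ while $(1-z)^{m+1}$ vanishes only at $z = 1$, the function $w$ is in fact analytic on the whole disk $|z| \le 1 + \eta$, so Cauchy's estimates give $\bigl|[z^i]w(z)\bigr| \ll \lambda^{i}$ for some $\lambda = \lambda(\eta) \in (0,1)$. Writing
\[
(1-z)^\beta v(z) \;=\; \sum_{j=0}^{m} v_j (1-z)^{\beta+j} \;+\; (1-z)^{\beta+m+1} w(z),
\]
the first step shows the $n$-th coefficient of the finite sum equals exactly $\sum_{j=0}^{m} v_j \binom{n-\beta-j-1}{n}$, the asserted main term (the hypothesis $\beta \notin \{0,1,2,\dots\}$ ensures each $\Gamma(-\beta-j)$ is finite, so these terms genuinely have order $n^{-\beta-j-1}$).

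It then remains to estimate $[z^n]\bigl((1-z)^{\beta+m+1}w(z)\bigr)$. Expanding this coefficient as the convolution $\sum_{i=0}^{n}\bigl([z^i]w\bigr)\cdot[z^{\,n-i}](1-z)^{\beta+m+1}$ and splitting the sum at $i = \lfloor n/2 \rfloor$: for $i \le n/2$ we have $n - i \ge n/2$, so by the Stirling bound of the first step each factor $[z^{n-i}](1-z)^{\beta+m+1}$ is $\ll n^{-m-\beta-2}$, and since $\sum_i \bigl|[z^i]w\bigr| < \infty$ this part contributes $\ll n^{-m-\beta-2}$; for $i > n/2$ we have $\bigl|[z^i]w\bigr| \ll \lambda^{n/2}$, while each $[z^{n-i}](1-z)^{\beta+m+1}$ is at most polynomially large in $n$, so this part is $\ll n^{O(1)}\lambda^{n/2}$, smaller than any power of $n$. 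Adding the two contributions yields the error term $O(n^{-m-\beta-2})$.

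The one point that needs care is the passage from the \emph{local} expansion of $v$ about $z = 1$ to the \emph{global} analyticity of $w$ on $|z| \le 1 + \eta$, since it is precisely this that converts the Taylor coefficients $[z^i]w$ about $z = 0$ into a geometrically decaying sequence; the rest is routine bookkeeping with Stirling's formula.
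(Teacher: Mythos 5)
Your proof is correct. Note, however, that the paper does not actually prove this lemma: it simply cites Wilf's \emph{generatingfunctionology}, Theorem 5.3.1, so there is no in-paper argument to compare against. What you have written is the standard ``subtracted singularities'' proof underlying that reference: peel off $P_m(z)=\sum_{j\le m}v_j(1-z)^j$, observe that $w(z)=(v(z)-P_m(z))/(1-z)^{m+1}$ is analytic on the whole disk $|z|\le 1+\eta$ (this is indeed the one step that genuinely uses the global analyticity hypothesis, as you flag), so its Taylor coefficients at $0$ decay geometrically, and then bound the coefficient of $(1-z)^{\beta+m+1}w(z)$ by splitting the convolution at $n/2$. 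All the estimates check out, including the uniform bound $\bigl|[z^{N}](1-z)^{\alpha}\bigr|\ll_\alpha N^{-\alpha-1}$ for $N\in[n/2,n]$ and the degenerate case where $\beta+m+1$ is a nonnegative integer (the coefficients are then eventually zero). One small imprecision: your parenthetical claim that $\beta\notin\{0,1,2,\ldots\}$ makes every $\Gamma(-\beta-j)$ finite is false when $\beta$ is a negative integer and $j\ge-\beta$; but this only affects the heuristic remark about the \emph{size} of the main terms, not the exact identity $[z^n](1-z)^{\beta+j}=\binom{n-\beta-j-1}{n}$ or the error bound, so the proof stands.
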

\begin{proof}
See \cite[Theorem 5.3.1]{Wilf}.
\end{proof}
We will also need the following classical result of Pringsheim.
\begin{lem}\label{lem:pringsheim}
Let $f(z)$ be a power series with finite radius of convergence $R > 0$. If all
of the coefficients of $f(z)$ are nonnegative, then, $z = R$ is a
singular point. 
\end{lem}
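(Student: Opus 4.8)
The plan is to argue by contradiction, continuing $f$ analytically along the positive real axis and using the nonnegativity of the coefficients to compare the Taylor expansion of $f$ around an interior real point with the original series. After replacing $f(z)$ by $f(Rz)$ we may assume $R = 1$; write $f(z) = \sum_{n \ge 0} a_n z^n$ with each $a_n \ge 0$. Suppose $z = 1$ were a regular point. Then $f$ extends analytically to a disk $B(1,\varepsilon)$ for some $\varepsilon > 0$, hence $f$ is analytic on the open set $U := B(0,1) \cup B(1,\varepsilon)$, whose complement is $U^{c} = \{\,|z| \ge 1\,\} \cap \{\,|z - 1| \ge \varepsilon\,\}$.

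Next I would fix a real base point $r \in (0,1)$ and estimate $\operatorname{dist}(r, U^{c})$. The nearest points of $U^{c}$ to $r$ lie on the unit circle at the angles $\pm\theta_{0}$ with $2\sin(\theta_{0}/2) = \varepsilon$, and the law of cosines gives $\operatorname{dist}(r,U^{c})^{2} = 1 - 2r\cos\theta_{0} + r^{2} \to 2 - 2\cos\theta_{0} = \varepsilon^{2}$ as $r \to 1^{-}$, while the radius $1 - r$ of the largest disk about $r$ inside $B(0,1)$ tends to $0$. Hence, choosing $r$ close enough to $1$, there is an $\eta > 0$ with $\overline{B(r,\, 1 - r + \eta)} \subseteq U$, so $f$ is analytic on that closed disk and its Taylor series centered at $r$ converges at the real point $t := 1 + \tfrac{\eta}{2}$, because $|t - r| = 1 - r + \tfrac{\eta}{2}$ is strictly less than the radius of convergence.

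Now nonnegativity enters. Differentiating the original series term by term (legitimate for $r < 1$) gives $f^{(k)}(r) = \sum_{n \ge k} a_{n}\, k!\binom{n}{k} r^{\,n-k} \ge 0$, so every term of the double series $\sum_{k}\sum_{n} a_{n}\binom{n}{k} r^{\,n-k}(t-r)^{k}$ is nonnegative (note $t - r > 0$). Interchanging the order of summation (Tonelli) and applying the binomial theorem to the inner sum,
\begin{equation*}
\sum_{k \ge 0}\frac{f^{(k)}(r)}{k!}(t-r)^{k} \;=\; \sum_{n \ge 0} a_{n} \sum_{k=0}^{n}\binom{n}{k} r^{\,n-k}(t-r)^{k} \;=\; \sum_{n \ge 0} a_{n}\, t^{n} .
\end{equation*}
The left-hand side is finite, so $\sum_{n} a_{n} t^{n}$ converges at $t = 1 + \tfrac{\eta}{2} > 1$; since the $a_{n}$ are nonnegative this forces the radius of convergence of $f$ to exceed $1$, contradicting $R = 1$. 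Undoing the normalization, $z = R$ is a singular point.

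I expect the only genuinely delicate point to be the geometric claim $\overline{B(r, 1 - r + \eta)} \subseteq U$ for $r$ near $1$; everything else --- term-by-term differentiation inside the disk of convergence, the rearrangement of a nonnegative double series, and the binomial identity --- is routine. One can bypass the explicit geometry by quoting the standard fact that the radius of convergence of a Taylor expansion equals the distance to the nearest singularity, but carrying out the estimate keeps the proof self-contained.
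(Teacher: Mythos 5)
Your proof is correct. The paper does not actually prove this lemma --- it is Pringsheim's theorem, and the authors simply cite Remmert, Ch.~8 --- and your argument is precisely the standard one given there: normalize to $R=1$, assume $z=1$ is regular, re-expand $f$ at a nearby real point $r<1$ so that the new Taylor series converges at some real $t>1$, and use nonnegativity of the coefficients to rearrange the resulting double series and contradict $R=1$; your geometric estimate of $\operatorname{dist}(r,U^{c})$ (or the shortcut you mention) is sound.
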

\begin{proof}
See \cite[Chapter 8]{Remmert}.
\end{proof}
We will use the following immediate consequence of Lemma~\ref{lem:pringsheim} 
theorem: if
$f(z)$, a power series with nonnegative coefficients, has an analytic continuation to $|z| < R + \eta$, for some
$\eta > 0$, then the abscissa of the first singularity of $f(z)$
on the axis $x > 0$ is equal to the radius of convergence $R$. 
Now we are ready to prove Theorem~\ref{thm:f_asymp}.
\begin{proof}[Proof of Theorem~\ref{thm:f_asymp}]
Let $R$ be the radius of convergence of the generating function $F(z)$.
First of all $R \leq 1/4$ since $f(n) \geq f_0(n)$ and $f_0(n) > (4 - \varepsilon)^{n}$ for any $\varepsilon > 0$ and all $n$ large enough. On the other hand from Lemma~\ref{lem:f_upper_bound} we know that $R \geq 1/8$.
For each integer $d \geq 2$, it results that $F(z^d) - z^d$ has radius of convergence $R^{1/d} \geq R^{1/2}$.
Hence, for any $\delta > 0$ and $|z| < R^{1/2} - \delta$ we have $|F(z^d) - z^d| \ll_{\delta} |z|^{2d}$ and $f(d) < (1/R + \varepsilon)^d$ for sufficiently large $d$.
Therefore, the series $\widetilde{F}(z)$ converges absolutely for $|z| < R^{1/2}$ and it is analytic in that region, note also that $R^{1/2} > R$.
For $|z| < R$, from Lemma~\ref{lem:f_recursion} we obtain
\begin{align*}
\sum_{n=1}^\infty f^+(n) z^n = z + \sum_{n=2}^\infty \sum_{k=1}^{n-1} f(n - k) f(k) z^n = z + F(z)^2 ,
\end{align*}
while
\begin{align*}
\sum_{n=1}^\infty f^\times(n) z^n &= \sum_{n=1}^\infty \sum_{\substack{d \,\mid\, n \\ 1 < d < n}} f(d) f(n / d) z^n \\
&= \sum_{d=2}^\infty f(d) \sum_{m=2}^\infty f(m) z^{dm} = \sum_{d=2}^\infty f(d) (F(z^d) - z^d) .
\end{align*}
Thus,
\begin{equation*}
F(z) = \sum_{n=1}^\infty f(n) z^n = \sum_{n=1}^\infty f^+(n) z^n + \sum_{n=1}^\infty f^\times(n) z^n = F(z)^2 + \tilde{F}(z) ,
\end{equation*}
so that
\begin{equation}\label{equ:F_func_equ}
F(z)^2 - F(z) + \tilde{F}(z) = 0 .
\end{equation}
Taking into account that $F(0) = \widetilde{F}(0) = 0$, we can solve the quadratic equation (\ref{equ:F_func_equ}) and get
\begin{equation}\label{equ:F_sqrt}
F(z) = \frac{1 - \sqrt{1 - 4\widetilde{F}(z)}}{2} , \quad \mbox{ for } |z| < R.
\end{equation}
Since the coefficients of $F(z)$ are all positive, by Lemma~\ref{lem:pringsheim} we have that $F(z)$ has a singularity at $z = R$.
As observed before, in the region $|z| < R^{1/2}$ the function $\widetilde{F}(z)$ is analytic and $R^{1/2} > R$, thus providing an analytic continuation
of $F(z)$ to the larger region $|z| < R^{1/2}$. 
From (\ref{equ:F_sqrt}) we expect that the first singularity of $F(z)$ on the positive real axis occur at the point $\xi$ at which we have $\widetilde{F}(\xi) = 1/4$.
Such $\xi$ clearly exists because  $\widetilde{F}(x) > x$, for $x > 0$,
so that $\xi < 1/4$, while
 $\widetilde{F}(z)$ is analytic in $|z| < 1/\sqrt{8} \leq R^{1/2}$. 
We notice also that the root $\xi$ is simple, 
because $F(x)$ is increasing and analytic
on the segment $0 \leq x < 1/\sqrt{8}$. Thus we can write,
$$
1 - 4 \widetilde{F}(z) = (1 - z/\xi) G(z)
$$
for some $G(z)$, analytic in $|z| < 1/\sqrt{8}$ and non-vanishing on $0 \leq x < 1/\sqrt{8}$. 
As mentioned earlier, the formula
$$
F(z) = \frac{1 - \sqrt{(1 - z/\xi)G(z)}}{2}
$$
provides an analytic continuation of $F(z)$
to the larger disc $|z| < 1/\sqrt{8}$, since the radius
of convergence of $F(z)$ satisfies $R \leq 1/4 < 1/\sqrt{8}$. 
As an immediate application of Lemma~\ref{lem:pringsheim} the first singularity
of $F(z)$ on the positive real axis corresponds to the radius of convergence
$R$. Thus $R = \xi$. 
Before applying Lemma~\ref{lem:darboux} we need to say a few things about the location
of the zeros of $G(z)$. Since $\widetilde{F}(z)$ has positive and never vanishing
coefficients, we have
$
|\widetilde{F}(r e^{i\theta})| < \widetilde{F}(r) \leq \widetilde{F}(R)
$ 
for all $\theta \neq 0$ and $r \leq R$. Using this 
we notice that for $z \neq \xi$, and
$|z| \leq \xi$, 
$$
|1 - 4\widetilde{F}(z)| \geq 1 - 4 |\widetilde{F}(z)| > 1 - 4 \widetilde{F}(\xi)
= 0.
$$ 
It follows
that $G(z)$ has no zeros in $|z| \leq \xi$. By analyticity
this implies that there exists a neighborhood $|z| \leq \xi + \eta$
for some $\eta > 0$, where $G(z)$ doesn't vanish, in particular
$\sqrt{G(z)}$ is well-defined and doesn't vanish there.  Now, 
applying Lemma~\ref{lem:darboux} to $F(z\xi)$, or rather more precisely
applying Lemma~\ref{lem:darboux} to $-\sqrt{(1-z)G(z\xi)}/2$ (which has radius of convergence equal to $1$ and differs from $F(z)$ only at the constant term) we conclude
that for any $m > 0$, and $n \rightarrow \infty$, 
$$
f(n) \xi^n = -
\sum_{j = 0}^{m} c_j \binom{n - j - 3/2}{n} + O(n^{-m - 5/2})
$$
where the coefficients $c_j$ are obtained by writing 
$$
\sqrt{G(z\xi)} = \sum_{j \geq 0} c_j (1 - z)^{j}
$$
in a small neighborhood of $z = 1$. Since, as $n \rightarrow \infty$,
$$
\binom{n - j - 3/2}{n} \sim \frac{a_j}{n^{j + 3/2}}
$$
with $a_j \neq 0$, the claim follows. 
\end{proof}
\section{Proof of Theorem~\ref{thm:fk_rude_asymp}}

We will in fact prove a result stronger than Theorem~\ref{thm:fk_rude_asymp}. However before stating it, we introduce the concept of a $k$-\emph{trace}.

\begin{defi}\label{defi:k_trace}
Let $k$ be a positive integer.
A $k$-\emph{trace} is triple $(p,\l, \r)$ where $p$ is a positive integer and $\l, \r \in \N_0^p$ are tuples such that
$\ell_1 + r_1 + \ldots + \ell_p + r_p + p = k$. 
We denote by $\T_k$ the set of all $k$-\emph{traces}.
We define also $\T_0 := \{(0,0,0)\}$ so that $(0,0,0)$ can be thought of as the only $0$-trace.
\end{defi}

We are ready to state our asymptotic formula for $f_k(n)$.

\begin{thm}\label{thm:fk_asymp}
For all integers $k \geq 0$, we have
\begin{equation*}
f_k(n) \sim \frac1{4\sqrt{\pi}} \, \frac{4^n}{n^{3/2}} \sum_{(p,\l, \r) \in \T_k} \frac{n^p}{p!} \prod_{i=1}^p \left(\sum_{t=1}^\infty \frac{(f_{\ell_i} *^\prime f_{r_i})(t)}{4^{t-1}}\right) ,
\end{equation*}
as $n \to +\infty$.
\end{thm}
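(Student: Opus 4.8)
The plan is to set up a system of generating functions for the $f_k(n)$, extract the singularity structure, and then track how each additional multiplication gate contributes an extra factor of $(1-z/\xi_0)^{-1}$-type growth. Write $F_k(z) := \sum_n f_k(n) z^n$. Since $f_0(n) = C_{n-1}$, we have $F_0(z) = (1 - \sqrt{1 - 4z})/2$, which has a square-root singularity at $z = 1/4$. Splitting an arithmetic formula using exactly $k$ multiplications according to whether its root is additive or multiplicative, and counting the number of multiplications in each subtree, yields the recursion
\begin{equation*}
F_k(z) = z \cdot [k = 0] + \sum_{a+b=k} F_a(z) F_b(z) + \sum_{a + b = k - 1} (F_a \ast^\prime F_b)(z),
\end{equation*}
where $(F_a \ast^\prime F_b)(z) := \sum_n (f_a \ast^\prime f_b)(n) z^n = \sum_{d,m \geq 2} f_a(d) f_b(m) z^{dm}$ has radius of convergence at least $1/2 > 1/4$. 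So each "multiplicative" term, being a bivariate substitution at $z^{dm}$, is analytic in a disk strictly larger than $|z| \leq 1/4$; this is the key structural fact that makes the $k=0$ singularity at $z = 1/4$ persist for all $k$.

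The core of the argument is an induction on $k$ establishing that $F_k(z)$ extends analytically to a disk $|z| < 1/4 + \eta$ slit along $[1/4, 1/4 + \eta)$, and that near $z = 1/4$ it has an expansion of the form $F_k(z) = P_k(z) + Q_k(z)(1 - 4z)^{1/2 - p}$ summed over the relevant $p$, with $P_k, Q_k$ analytic at $1/4$; more precisely one shows by induction that $F_k$ is a polynomial in $(1-4z)^{1/2}$ and $(1-4z)^{-1/2}$ with analytic-at-$1/4$ coefficients, the most singular term being of order $(1-4z)^{1/2-k}$. Indeed, multiplying two such objects of singular orders $(1-4z)^{1/2-a}$ and $(1-4z)^{1/2-b}$ produces order $(1-4z)^{1-a-b} = (1-4z)^{1/2 - (a+b) - (-1/2)}$... one has to be slightly careful: the product $F_a F_b$ contributes to $F_{a+b}$ a term of singular exponent $(1/2-a)+(1/2-b) = 1 - a - b$, i.e. one power of $(1-4z)$ \emph{less singular} than the target exponent $1/2 - (a+b)$ would suggest, which is why the leading singular behavior of $F_k$ is actually governed by the \emph{linear} terms $F_0 F_k$ and $F_k F_0$ in the recursion: isolating those, $F_k (1 - 2F_0) = (\text{terms built from } F_0,\dots,F_{k-1})$, and since $1 - 2F_0(z) = \sqrt{1-4z}$, we get
\begin{equation*}
F_k(z) = \frac{1}{\sqrt{1-4z}}\Big( \sum_{\substack{a+b=k \\ a,b < k}} F_a F_b + \sum_{a+b=k-1}(F_a \ast^\prime F_b)(z) \Big).
\end{equation*}
This exhibits $F_k$ with one extra factor $(1-4z)^{-1/2}$ relative to the bracket, and unwinding the recursion shows the most singular term comes from iterating this: each multiplication gate in a "chain" contributes the factor $\sum_t (f_{\ell_i} \ast^\prime f_{r_i})(t) / 4^{t-1}$ evaluated as $(F_{\ell_i} \ast^\prime F_{r_i})(1/4)$, and the combinatorics of how $k$ gates distribute into $p$ primitive multiplicative nodes with left/right subtree-multiplication-counts $(\ell_i, r_i)$ is exactly the set $\T_k$ of $k$-traces, with the $n^p/p!$ arising from the $p$-fold singular amplification $(1-4z)^{-p - 1/2}$ via Darboux. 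Then one applies Lemma~\ref{lem:darboux} (with $\beta = -p - 1/2$, whose $n$-th coefficient is $\sim a_p n^{p + 1/2}/\Gamma(p+1/2) \asymp 4^n n^{p - 3/2} \cdot n^p$-type growth after the $4^n$ rescaling) term by term to read off the stated asymptotic; summing the contributions of the finitely many traces in $\T_k$ gives Theorem~\ref{thm:fk_asymp}, and Theorem~\ref{thm:fk_rude_asymp} follows by checking that among all $(p,\l,\r) \in \T_k$ the dominant power of $n$ is $n^k$, attained precisely when $p = k$ and all $\ell_i = r_i = 0$, in which case each Euler-type factor is $\sum_t (f_0 \ast^\prime f_0)(t)/4^{t-1} = \sigma$ and the number of such traces contributes the $1/k!$ (from $n^p/p!$ with $p=k$) together with $\sigma^k$.

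The main obstacle I expect is the bookkeeping in the induction: one must prove \emph{simultaneously} (a) the analytic continuation of each $F_k$ past $|z| = 1/4$ — which requires knowing the bivariate series $(F_a \ast^\prime F_b)(z)$ and all the lower $F_j$ are analytic there, a clean consequence of radius-of-convergence $\geq 1/2$ — and (b) the precise shape of the local expansion at $z = 1/4$, i.e. that $F_k \in \mathbf{C}\{1-4z\}[(1-4z)^{\pm 1/2}]$ with top singular order $(1-4z)^{1/2-k}$ and with an \emph{explicit} leading coefficient matching the trace sum. Getting (b) right means carefully multiplying out the local expansions in the quadratic recursion and verifying that no unexpected cancellation lowers the order and no lower-order term secretly dominates for the coefficient asymptotics; the $\T_k$ formalism is precisely the device for organizing this. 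A secondary technical point is justifying that the error terms from Darboux's method, applied to each of the boundedly many pieces, can be combined — this is routine since $k$ is fixed and $\T_k$ is finite. Everything else (Stirling via Lemma~\ref{lem:f0_asymp}, the identity $1 - 2F_0 = \sqrt{1-4z}$, convergence of the Euler factors $\sum_t (f_{\ell_i}\ast^\prime f_{r_i})(t)/4^{t-1}$, which converges because $(f_{\ell}\ast^\prime f_r)(t) \ll 4^t t^{O(1)}$ away from the boundary) is bounded-effort.
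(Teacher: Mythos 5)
Your strategy is sound and it is genuinely different from the paper's. The paper proves Theorem~\ref{thm:fk_asymp} by purely elementary means: Lemma~\ref{lem:fplr_formula} gives an \emph{exact} combinatorial formula for $f_{(p,\l,\r)}(n)$ as a convolution against $\binom{m}{p}f_0(m)$, and the asymptotics are then extracted by a direct $\liminf$/$\limsup$ argument with truncation parameters $\varepsilon, N$, using only Stirling (Lemma~\ref{lem:f0_asymp}) and the crude bound $(f_\ell *^\prime f_r)(t) \ll 3^t$ of Lemma~\ref{lem:conv_bound}; no complex analysis enters. You instead set up the generating-function recursion $F_k = [k=0]z + \sum_{a+b=k}F_aF_b + \sum_{a+b=k-1}(F_a*^\prime F_b)$, divide out $1-2F_0 = \sqrt{1-4z}$, and do singularity analysis via Darboux --- which is closer in spirit to the paper's proof of Theorem~\ref{thm:f_asymp} than to its proof of this theorem. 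Your route buys a quicker path to the answer: since the trace sum is dominated by its $p=k$ term, the stated asymptotic is equivalent to Theorem~\ref{thm:fk_rude_asymp}, so you only need the single most singular term $(1-4z)^{1/2-k}$, whose coefficient obeys the Catalan-type recursion $L_k = \sum_{a+b=k,\,a,b\geq 1}L_aL_b$ with $L_1 = (F_0*^\prime F_0)(1/4)$, giving $L_k = C_{k-1}L_1^k$ and the right constant. The paper's route buys more: the individual asymptotic for each $f_{(p,\l,\r)}(n)$, and an argument that stays elementary. Two small cautions on your write-up: the sentence claiming the leading singular behavior is ``governed by the linear terms $F_0F_k$'' is misleading --- those terms only supply the factor $\sqrt{1-4z}$ on the left, while for $k\geq 2$ the top-order singularity of the bracket comes from the quadratic terms $F_aF_b$ with $a,b\geq 1$; your displayed formula is nevertheless correct. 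And to invoke Lemma~\ref{lem:darboux} you must check that the coefficient functions in the local expansion are analytic in a full disk of radius $>1/4$, not merely near $z=1/4$; this does hold because they are polynomials in the $(F_a*^\prime F_b)(z)$, each of radius $\geq 1/3$, but it should be said.
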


Observe that Theorem~\ref{thm:fk_rude_asymp} follows immediately from Theorem~\ref{thm:fk_asymp}, since for any $k \in \N$ the only $(p,\l, \r) \in \T_k$ with $p \geq k$ is $(k,\mathbf{0},\mathbf{0})$.
The next definition connects $k$-traces to arithmetic formulas.

\begin{defi}\label{defi:trace_of_A}
Suppose that $A$ is an arithmetic formulas for $n$ with $k$ multiplicative nodes.
If $k=0$ then the \emph{trace} of $A$ is $(0,0,0)$.
If $k \geq 1$, let $N_1, \ldots, N_p$ be the primitive nodes of $A$, ordered from left to right (there is no ambiguity since no primitive node is the ancestor of another primitive node).
For $i=1,\ldots,p$, let $\ell_i$, respectively $r_i$, be the number of multiplicative nodes in the left, respectively right, subtree of $N_i$.
Then the trace of $A$ is the triple $(p,\l,\r)$, with $\l = (\ell_1, \ldots, \ell_p)$ and $\r = (r_1, \ldots, r_p)$.
Finally, for all $k \in \N_0$ and $(p,\l,\r) \in \T_k$ we denote by $f_{(p,\l,\r)}(n)$ the number of arithmetic formulas for $n$ with trace $(p,\l,\r)$.
\end{defi}

It is easy to see that Definition \ref{defi:k_trace} and \ref{defi:trace_of_A} are consistent to each other, i.e., if $A$ is an arithmetic formula with $k$ multiplicative nodes then the trace of $A$ is actually a $k$-trace.

We give now a combinatorial formula for $f_{(p,\l,\r)}$ in terms of $f_0$ and $f_{\ell_i}$, $f_{r_i}$.

\begin{lem}\label{lem:fplr_formula}
For $k \in \N$ and $(p,\l,\r) \in \T_k$, we have
\begin{equation*}
f_{(p,\l,\r)}(n) = \sum_{n_1 + \cdots + n_p + m = n + p} \binom{m}{p} f_0(m) \prod_{i=1}^p (f_{\ell_i} *^\prime f_{r_i})(n_i) ,
\end{equation*}
where the sum runs over all $n_1, \ldots, n_p, m \in \N$ such that $n_1 + \cdots + n_p + m = n + p$.
\end{lem}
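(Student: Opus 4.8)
The plan is to set up a bijective decomposition of an arithmetic formula with trace $(p,\l,\r)$ and then count the resulting independent pieces. Given such a formula $A$ for $n$, let $N_1,\dots,N_p$ be its primitive multiplicative nodes, ordered from left to right. Since no primitive node lies below another, replacing the subtree rooted at each $N_i$ by a single leaf produces a full binary tree $T$ all of whose internal nodes are additive; write $m$ for the number of leaves of $T$. Exactly $p$ of those leaves are the (contracted) $N_i$, and the remaining $m-p$ leaves are original leaves of $A$, each carrying value $1$. Conversely, $A$ is recovered from the data: (i) the full binary tree $T$; (ii) the choice, among the $m$ leaves of $T$ read from left to right, of the $p$ that are to become $N_1,\dots,N_p$; and (iii) for each $i$, an ordered pair $(L_i,R_i)$ of arithmetic formulas grafted as the left and right subtrees of $N_i$.

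I would then verify that this is a bijection onto the set of arithmetic formulas of trace $(p,\l,\r)$. In the reconstructed tree every ancestor of each $N_i$ lies inside $T$ and is therefore additive, so each $N_i$ is primitive; conversely, any multiplicative node lies inside some grafted subtree and hence is a descendant of the corresponding $N_i$, or equals it, so the primitive nodes of the reconstruction are exactly $N_1,\dots,N_p$, in the prescribed order. Requiring that $L_i$ have $\ell_i$ and $R_i$ have $r_i$ multiplicative nodes, the total number of multiplicative nodes of $A$ is $\sum_{i=1}^p(1+\ell_i+r_i)=p+\sum_i\ell_i+\sum_i r_i=k$, consistent with $(p,\l,\r)\in\T_k$.

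Finally I would count each piece. If $N_i$ has value $n_i$, then $L_i$ and $R_i$ must evaluate to integers, both $\geq 2$, whose product is $n_i$, so the number of admissible pairs $(L_i,R_i)$ is $\sum_{d\,\mid\,n_i,\,1<d<n_i} f_{\ell_i}(d)\,f_{r_i}(n_i/d)=(f_{\ell_i} *^\prime f_{r_i})(n_i)$. A full binary tree $T$ with $m$ leaves is, after labelling every internal node by $+$ and every leaf by $1$, the same thing as an addition-only arithmetic formula for $m$, so there are $f_0(m)=C_{m-1}$ choices for $T$, and then $\binom{m}{p}$ choices for its distinguished leaves. Evaluating $A$ gives $n=n_1+\cdots+n_p+(m-p)$, that is, $n_1+\cdots+n_p+m=n+p$. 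Summing $\binom{m}{p}f_0(m)\prod_{i=1}^p(f_{\ell_i} *^\prime f_{r_i})(n_i)$ over all $n_1,\dots,n_p,m\in\N$ with $n_1+\cdots+n_p+m=n+p$ then yields the claimed identity; the convolution factors automatically vanish unless $n_i$ is a product of two integers $\geq 2$, and $\binom{m}{p}=0$ when $m<p$, so no spurious terms are introduced.

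The step I expect to be the main obstacle is the bijectivity claim of the second paragraph — in particular checking that an arbitrary grafting really produces a formula whose primitive nodes are precisely the distinguished leaves, taken in the correct left-to-right order, and that the leaf book-keeping gives exactly the constraint $n_1+\cdots+n_p+m=n+p$; the remainder is a routine product count of mutually independent choices.
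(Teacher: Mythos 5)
Your proposal is correct and follows essentially the same decomposition as the paper: contract the subtrees at the primitive multiplicative nodes to obtain an addition-only full binary tree with $m$ leaves (counted by $f_0(m)$), choose $\binom{m}{p}$ leaf positions for the $N_i$, and count the grafted pairs of subtrees by the proper Dirichlet convolution $(f_{\ell_i}*^\prime f_{r_i})(n_i)$. The bijectivity check you flag as the main obstacle is exactly the point the paper treats as evident (``all these choices are independent''), and your verification of it is sound.
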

\begin{proof}
The general arithmetic formula $A$ evaluating to $n$ and with trace $(p,\l,\r)$ is depicted in Fig.~\ref{img:arith_rep_of_n}, where $n_1, \ldots, n_p$ are all the primitive multiplicative nodes of $A$ (we identify the nodes with their values since there is no risk of confusion).
Set $m := n - (n_1 + \cdots + n_p) + p$. 
On the one hand, if we remove from $A$ all the nodes below $n_1, \ldots, n_p$ we get a full binary tree with $m$ leaves.
There are exactly $f_0(m)$ such trees (addition is associative) and the nodes $n_1, \ldots, n_p$ can be attached to the leaves of each of them in $\binom{m}{p}$ different ways.
On the other hand, any subtree of $A$ with root $a_i$, respectively $b_i$, is an arithmetic formula for $a_i$, respectively $b_i$, and there are exactly $f_{\ell_i}(a_i)$, respectively $f_{r_i}(b_i)$, such arithmetic formulas.
Hence, since $a_i b_i = n_i$, there are $(f_{\ell_i} *^\prime f_{r_i})(n_i)$ possible subtrees of $n_i$.
All these choices are independent so the claim follows.
\end{proof}

\begin{figure}[h!]
\centering
\includegraphics[scale=1.0]{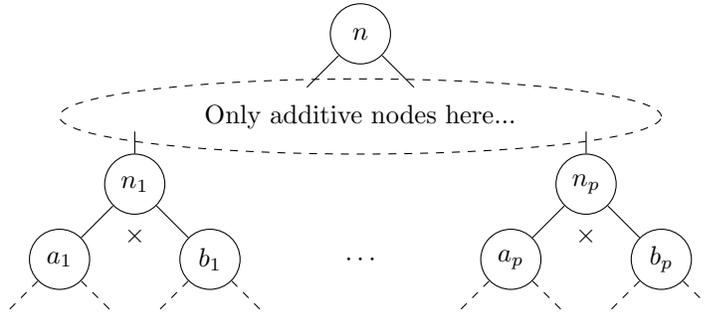}
\caption{An arithmetic formula for $n$.}
\label{img:arith_rep_of_n}
\end{figure}

\FloatBarrier

The next lemma is an easy upper bound on the proper Dirichlet convolution of two arithmetic functions.

\begin{lem}\label{lem:conv_bound}
Let $g$ and $h$ be arithmetic functions such that $g(n), h(n) \ll 4^n n^s$ for $n \in \N$, with $C > 0$ and $s \in \R$.
Then $(g *^\prime h)(n) \ll 3^n$ for $n \in \N$.
\end{lem}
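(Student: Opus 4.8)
The plan is to bound the proper Dirichlet convolution
\[
(g *^\prime h)(n) = \sum_{\substack{d \,\mid\, n \\ 1 < d < n}} g(d)\, h(n/d)
\]
by inserting the hypothesized estimates $g(d) \ll 4^d d^s$ and $h(n/d) \ll 4^{n/d} (n/d)^s$, so that each summand is $\ll 4^{d + n/d}\, d^s (n/d)^s$. The key observation is that for a proper divisor $d$ of $n$ we have $2 \le d \le n/2$, and on this range the exponent $d + n/d$ is maximized at the endpoints, giving $d + n/d \le 2 + n/2$. Hence $4^{d+n/d} \le 4^{2 + n/2} = 16 \cdot 2^n$. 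The polynomial factors $d^s (n/d)^s = n^s$ are harmless, and the number of divisors of $n$ is $n^{o(1)}$, so the whole sum is $\ll n^{O(1)} 2^n$, which is $\ll 3^n$ since $2^n = o\big((3/2 - \varepsilon)^n\big)$ absorbs any polynomial. (One should note the hypothesis as stated names a constant $C > 0$ that plays no role beyond the implied constant in $\ll$; I would simply treat it as the implied constant.)

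More carefully, I would split the divisor sum into the two ``small'' terms $d = 2$ and $d = n/2$ (when $2 \mid n$) and the ``bulk'' $3 \le d \le n/3$. For the bulk, $d + n/d \le 3 + n/3$ by the same convexity of $t \mapsto t + n/t$, so each such summand is $\ll 4^{3 + n/3} n^{\max(2s,0)} = O(4^{n/3} n^{O(1)})$, and there are at most $n$ of them, giving a total of $O(4^{n/3} n^{O(1)}) = O(3^n)$ with lots of room. For $d = 2$ (and symmetrically $d = n/2$) the summand is $\ll 4^{2 + n/2} n^{|s|} = O(2^n n^{|s|}) = O(3^n)$. Adding the $O(1)$ many exceptional terms to the bulk bound finishes it.

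There is no real obstacle here; the only thing requiring a moment's care is the elementary inequality that for $n$ fixed and $2 \le d \le n/2$ the quantity $4^{d} \cdot 4^{n/d}$ is at most $4^{2} \cdot 4^{n/2}$ — i.e.\ that the convex function $\varphi(d) = d + n/d$ attains its maximum over the interval $[2, n/2]$ at an endpoint, where by symmetry both endpoints give the same value $2 + n/2$. This is immediate from convexity of $\varphi$ on $(0,\infty)$. Everything else (bounding $\tau(n) = n^{o(1)}$, absorbing polynomial factors into the gap between $2$ and $3$) is routine. I would present the argument in the two-regime form above so that the final bound $(g *^\prime h)(n) \ll 3^n$ falls out with an explicitly visible margin, and remark that in fact one gets the sharper $(g *^\prime h)(n) \ll 2^n n^{|s|}$, but $3^n$ is all that is needed downstream.
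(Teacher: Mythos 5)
Your proposal is correct and follows essentially the same route as the paper: insert the bounds $g(d)\,h(n/d) \ll 4^{d+n/d} n^s$, use the elementary inequality $d + n/d \le 2 + n/2$ for proper divisors $d$, and absorb the divisor count and polynomial factors into the gap between $2^n$ and $3^n$. The two-regime refinement and the sharper bound $\ll 2^n n^{|s|}$ are fine but unnecessary; the paper simply bounds the number of proper divisors by $n$ and concludes $\ll 2^n n^{s+1} \ll 3^n$.
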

\begin{proof}
We have
\begin{align*}
(g *^\prime h)(n) &= \sum_{\substack{d \,\mid\, n \\ 1 < d < n}} g(d) \, h(n/d) \ll n^s \sum_{\substack{d \,\mid\, n \\ 1 < d < n}} 4^{d+n/d} \ll 2^n n^s \sum_{\substack{d \,\mid\, n \\ 1 < d < n}} 1 \\
&\ll 2^n n^{s+1}  \ll 3^n ,
\end{align*}
since $d + n / d \leq 2 + n / 2$ for all proper divisors $d$ of $n$.
\end{proof}

At this point, we have all the tools required to prove Theorem~\ref{thm:fk_asymp}.
\begin{proof}[Proof of Theorem~\ref{thm:fk_asymp}]
We proceed by strong induction on $k$. 
For $k = 0$, the claim follows immediately from Lemma~\ref{lem:f0_asymp}.
Suppose $k \geq 1$ and that the statement holds for all nonnegative integers $k^\prime < k$.
Then, as $n \to +\infty$, we have $f_l(n), f_r(n) \ll 4^n n^{k-3/2}$ for all nonnegative integers $l,r < k$ and applying Lemma~\ref{lem:conv_bound} we conclude that $(f_l *^\prime f_r)(n) \ll 3^n$.
In particular, the series
\begin{equation*}
\sum_{t=1}^\infty \frac{(f_l *^\prime f_r)(t)}{4^{t-1}}
\end{equation*}
converges.
Since $\T_k$ is finite and, since
\begin{equation*}
f_k(n) = \sum_{(p,\l,\r) \in \T_k} f_{(p,\l,\r)}(n) ,
\end{equation*}
it suffices to prove that for all $(p,\l,\r) \in \T_k$ we have
\begin{equation}\label{equ:fplr_asymp}
f_{(p,\l,\r)}(n) \sim \frac1{4\sqrt{\pi} p!} \, 4^n n^{p-3/2} \prod_{i=1}^p \left( \sum_{t=1}^\infty \frac{(f_{\ell_i} *^\prime f_{r_i})(t)}{4^{t-1}} \right) ,
\end{equation}
as $n \to +\infty$.
Fix $\varepsilon > 0$ and $N \in \N$.
In light of Lemma~\ref{lem:f0_asymp} and since $\binom{m}{p} \sim \tfrac{m^p}{p!}$ as $m \to +\infty$, there exists a positive integer $n_{\varepsilon,N} > N$ such that
\begin{equation*}
\binom{m}{p} f_0(m) \geq \left(\frac1{4\sqrt{\pi}p!} - \varepsilon\right) 4^m n^{p-3/2},
\end{equation*}
for all positive integers $n \geq n_{\varepsilon,N}$ and $m \in [n-N+p,n]$.
Consequently, using Lemma~\ref{lem:fplr_formula}, we obtain
\begin{align*}
f_{(p,\l,\r)}(n) &\geq \sum_{\substack{n_1 + \cdots + n_p + m = n + p \\ n_1 + \cdots + n_p \leq N}} \binom{m}{p} f_0(m) \prod_{i=1}^p (f_{\ell_i} *^\prime f_{r_i})(n_i) \\
&\geq \left(\frac1{4\sqrt{\pi}p!} - \varepsilon\right) n^{p-3/2} \sum_{\substack{n_1 + \cdots + n_p + m = n + p \\ n_1 + \cdots + n_p \leq N}} 4^m \prod_{i=1}^p (f_{\ell_i} *^\prime f_{r_i})(n_i) \\
&\geq \left(\frac1{4\sqrt{\pi}p!} - \varepsilon\right) \, 4^n n^{p-3/2} \sum_{n_1 + \cdots + n_p \leq N} \prod_{i=1}^p \frac{(f_{\ell_i} *^\prime f_{r_i})(n_i)}{4^{n_i-1}}
\end{align*}
for $n \geq n_{\varepsilon,N}$, so that
\begin{align*}
\liminf_{n \to \infty} \frac{f_{(p,\l,\r)}(n)}{4^n n^{p-3/2}} \geq \left(\frac1{4\sqrt{\pi}p!} - \varepsilon\right) \sum_{n_1 + \cdots + n_p \leq N} \prod_{i=1}^p \frac{(f_{\ell_i} *^\prime f_{r_i})(n_i)}{4^{n_i-1}} .
\end{align*}
Therefore, as $\varepsilon \to 0$ and $N \to +\infty$, we get 
\begin{align}\label{equ:fplr_liminf}
\liminf_{n \to \infty} \frac{f_{(p,\l,\r)}(n)}{4^n n^{p-3/2}} &\geq \frac1{4\sqrt{\pi}p!} \sum_{(n_1, \ldots, n_p) \in \N^p} \prod_{i=1}^p \frac{(f_{\ell_i} *^\prime f_{r_i})(n_i)}{4^{n_i-1}} \\ \nonumber
&= \frac1{4\sqrt{\pi}p!} \prod_{i=1}^p \left(\sum_{t=1}^\infty \frac{(f_{\ell_i} *^\prime f_{r_i})(t)}{4^{t-1}}\right) .
\end{align}
On the other hand, there exists $m_\varepsilon \in \N$ such that
\begin{equation*}
\binom{m}{p} f_0(m) \leq \left(\frac1{4\sqrt{\pi}p!} + \varepsilon\right) 4^m m^{p-3/2} ,
\end{equation*}
for all $m \geq m_\varepsilon$.
Thus,
\begin{align}\label{equ:big_m}
\sum_{\substack{n_1 + \cdots + n_p + m = n + p \\ m \geq m_\varepsilon}} &\binom{m}{p} f_0(m) \prod_{i=1}^p (f_{\ell_i} *^\prime f_{r_i})(n_i) \\ \nonumber &
\leq \left(\frac1{4\sqrt{\pi}p!} + \varepsilon\right) \sum_{\substack{n_1 + \cdots + n_p + m = n + p \\ m \geq m_\varepsilon}} 4^m m^{p-3/2} \prod_{i=1}^p (f_{\ell_i} *^\prime f_{r_i})(n_i) \\ \nonumber &
\leq \left(\frac1{4\sqrt{\pi}p!} + \varepsilon\right) 4^n n^{p-3/2} \sum_{n_1 + \cdots + n_p \leq n + p - m_\varepsilon} \prod_{i=1}^p \frac{(f_{\ell_i} *^\prime f_{r_i})(n_i)}{4^{n_i-1}} \\ \nonumber &
\leq \left(\frac1{4\sqrt{\pi}p!} + \varepsilon\right) 4^n n^{p-3/2} \prod_{i=1}^p \left(\sum_{t=1}^\infty \frac{(f_{\ell_i} *^\prime f_{r_i})(t)}{4^{t-1}}\right) .
\end{align}
We claim that
\begin{equation}\label{equ:big_m_little_o}
\sum_{n_1 + \cdots + n_p > n + p - m_\varepsilon} \prod_{i=1}^p \frac{(f_{\ell_i} *^\prime f_{r_i})(n_i)}{4^{n_i-1}} = o(n^{p-3/2}) ,
\end{equation}
as $n \to +\infty$.
This is straightforward if $p \geq 2$, since the left hand side of (\ref{equ:big_m_little_o}) is bounded while $n^{p-3/2} \to +\infty$. On the other hand if $p=1$ then
\begin{equation*}
\sum_{n_1 > n + 1 - m_\varepsilon} \frac{(f_{\ell_1} *^\prime f_{r_1})(n_1)}{4^{n_1-1}} = O\!\left((3/4)^n\right) = o(n^{-1/2}) ,
\end{equation*}
as $n \to +\infty$.
Hence,
\begin{align}\label{equ:little_m} 
\sum_{\substack{n_1 + \cdots + n_p + m = n + p \\ m < m_\varepsilon}} & \binom{m}{p}  f_0(m) \prod_{i=1}^p (f_{\ell_i} *^\prime f_{r_i})(n_i) \\ \nonumber &
\leq \left(\max_{m < m_\varepsilon}4^{-m}\binom{m}{p} f_0(m)\right) \sum_{\substack{n_1 + \cdots + n_p + m = n + p \\ m < m_\varepsilon}} 4^m \prod_{i=1}^p (f_{\ell_i} *^\prime f_{r_i})(n_i) \\ \nonumber &
\ll 4^n \sum_{\substack{n_1 + \cdots + n_p > n + p - m_\varepsilon}} \prod_{i=1}^p \frac{(f_{\ell_i} *^\prime f_{r_i})(n_i)}{4^{n_i-1}} 
= o(4^n n^{p-3/2})
\end{align}
as $n \to +\infty$.
Therefore, summing (\ref{equ:big_m}) and (\ref{equ:little_m}), and using Lemma~\ref{lem:fplr_formula}, we obtain
\begin{align*}
\limsup_{n \to \infty} \frac{f_{(p,\l,\r)}(n)}{4^n n^{p-3/2}} &\leq \left(\frac1{4\sqrt{\pi}p!} + \varepsilon\right) \prod_{i=1}^p \left(\sum_{t=1}^\infty \frac{(f_{\ell_i} *^\prime f_{r_i})(t)}{4^{t-1}}\right) .
\end{align*}
We conclude that as $\varepsilon \to 0$, we get
\begin{align*}
\limsup_{n \to \infty} \frac{f_{(p,\l,\r)}(n)}{4^n n^{p-3/2}} \leq \frac1{4\sqrt{\pi}p!} \prod_{i=1}^p \left(\sum_{t=1}^\infty \frac{(f_{\ell_i} *^\prime f_{r_i})(t)}{4^{t-1}}\right).
\end{align*}
Combining this with (\ref{equ:fplr_liminf}) give (\ref{equ:fplr_asymp}) concludes the proof.

\section{Proof of Theorem~\ref{thm:S_short_bound}}

Set $c := (1 - \varepsilon) / \log 4$ and for $x > 0$ define
\begin{equation*}
E(x) := \{n \leq x : S_{\mathrm{short}}(n) < c \log n\} .
\end{equation*}
For each positive integer $k$, let $\ell(k)$ be the number of exponential arithmetic formulas of length $k$.
Writing such formulas in Polish notation we see that $\ell(k) \leq 4^k$.
In fact, for each of the $k$ symbols of the Polish notation we have at most $4$ choices, corresponding to addition, multiplication, exponentiation or $1$.
Furthermore, observe that if $A_n$ denote a shortest length arithmetic formula for $n$, then clearly $A_m \neq A_n$ for all $m \neq n$.
In conclusion,
\begin{equation*}
|E(x)| \leq \sum_{k < c \log x} \ell(k) \leq \sum_{k < c \log x} 4^k = O(x^{1-\varepsilon}) ,
\end{equation*}
which is our claim.
\end{proof}

\section{Proof of Theorem~\ref{thm:S_FCF_asymp}}
Throughout this section, given a positive integer $n$, we write
\begin{equation*}
n = \sum_{j = 0}^\infty d_j(n) 2^j , \mbox{ with } d_j(n) \in \{0,1\} ,
\end{equation*}
for its binary expansion. In particular, we define
\begin{equation*}
s_2(n) := |\{j \geq 0 : d_j(n) = 1\}| = \sum_{j=0}^\infty d_j(n) ,
\end{equation*}
i.e., the number of nonzero binary digits of $n$.
Furthermore, let $\lb x := \log x / \log 2$ be the binary logarithm of $x$.

\begin{lem}\label{lem:weak}
For fixed $\varepsilon > 0 $, if
\begin{equation*}
S_\varepsilon(x) := \left\{n \leq x : s_2(n) \leq \left(\tfrac1{2} - \varepsilon\right) \!\lb x \right\},
\end{equation*}
then $|S_\varepsilon(x)| = o(x)$, as $x \to \infty$.
\end{lem}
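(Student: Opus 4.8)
The plan is to reduce the count to a standard bound on partial sums of binomial coefficients via the binary entropy function, and then observe that the relevant entropy value is $< 1$. First I would note that every $n \leq x$ has $d_j(n) = 0$ for $j \geq L$, where $L := \lfloor \lb x \rfloor + 1$, so that $L > \lb x$ and $L = \lb x + O(1)$; hence an integer counted by $S_\varepsilon(x)$ is completely determined by the choice of which of its first $L$ binary digits equal $1$, and at most $\big(\tfrac12 - \varepsilon\big)\lb x$ of them can equal $1$. Consequently
\begin{equation*}
|S_\varepsilon(x)| \leq \sum_{0 \leq j \leq (1/2 - \varepsilon)\lb x} \binom{L}{j} .
\end{equation*}

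Next I would invoke the classical entropy bound: for every integer $L \geq 1$ and every real $\alpha$ with $0 \leq \alpha \leq \tfrac12$,
\begin{equation*}
\sum_{0 \leq j \leq \alpha L} \binom{L}{j} \;\leq\; 2^{H(\alpha) L}, \qquad H(\alpha) := -\alpha \lb \alpha - (1 - \alpha)\lb(1 - \alpha) ,
\end{equation*}
with the convention $0\,\lb 0 = 0$; this follows by bounding $1 = \big(\alpha + (1-\alpha)\big)^L$ from below by the partial sum of its binomial expansion and using $\alpha/(1-\alpha) \leq 1$ to replace each factor $\big(\tfrac{\alpha}{1-\alpha}\big)^{j}$ with $j \leq \alpha L$ by $\big(\tfrac{\alpha}{1-\alpha}\big)^{\alpha L}$. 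I apply this with $\alpha := \big(\tfrac12 - \varepsilon\big)\lb x / L$, which satisfies $0 \leq \alpha < \tfrac12 - \varepsilon < \tfrac12$ because $L > \lb x$. Since $H$ is continuous and strictly increasing on $[0, \tfrac12]$, this gives $H(\alpha) < H\big(\tfrac12 - \varepsilon\big) =: \theta < 1$, and therefore
\begin{equation*}
|S_\varepsilon(x)| \;\leq\; 2^{\theta L} \;=\; 2^{\theta(\lb x + O(1))} \;\ll\; x^{\theta} \;=\; o(x) ,
\end{equation*}
as $x \to \infty$, which is the assertion; in fact one obtains the quantitatively stronger statement $|S_\varepsilon(x)| \ll_\varepsilon x^{\theta}$ with an explicit exponent $\theta = \theta(\varepsilon) < 1$.

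I do not expect any genuine obstacle here: the argument is entirely routine, and the only points that require a bit of care are the bookkeeping of the number of binary digits (the $O(1)$ gap between $L$ and $\lb x$ is harmless since $\theta < 1$) and the monotonicity of the entropy function on $[0, \tfrac12]$. If one prefers to avoid naming the entropy function altogether, the same conclusion follows from the cruder estimate $\sum_{j \leq \alpha L}\binom{L}{j} \leq (\alpha L + 1)\binom{L}{\lfloor \alpha L\rfloor}$ together with Stirling's formula, which again yields a bound of the shape $x^{\theta + o(1)}$ with $\theta < 1$.
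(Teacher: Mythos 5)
Your proof is correct, but it takes a genuinely different route from the paper. The paper's argument is probabilistic and soft: it models the first $N$ binary digits of a uniformly random integer below $2^N$ as i.i.d.\ Bernoulli$(1/2)$ variables and invokes the weak law of large numbers to conclude that the proportion of $n < 2^N$ with $s_2(n) \leq (\tfrac12 - \varepsilon)N$ tends to $0$; this delivers exactly the $o(x)$ bound stated, with no rate. Your argument instead counts binary strings directly via the entropy (Chernoff-type) bound
\begin{equation*}
\sum_{0 \leq j \leq \alpha L} \binom{L}{j} \leq 2^{H(\alpha) L}, \qquad 0 \leq \alpha \leq \tfrac12 ,
\end{equation*}
and yields the quantitatively stronger estimate $|S_\varepsilon(x)| \ll_\varepsilon x^{\theta}$ with an explicit exponent $\theta = H(\tfrac12 - \varepsilon) < 1$. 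The bookkeeping in your write-up (taking $L = \lfloor \lb x \rfloor + 1$ digits, noting $\alpha < \tfrac12 - \varepsilon$, using monotonicity of $H$ on $[0,\tfrac12]$) is all sound; the only pedantic remark is that for $\varepsilon \geq \tfrac12$ the set $S_\varepsilon(x)$ is empty and the claim is trivial, so one may as well assume $\varepsilon < \tfrac12$ so that $\alpha \geq 0$. What each approach buys: the paper's proof is shorter and requires nothing beyond the weak law of large numbers, which is all that Theorem~\ref{thm:S_FCF_asymp} needs (the lemma is only used to discard a set of density $o(1)$); your proof costs one standard combinatorial inequality but gives a power saving that would matter if one wanted an exceptional-set bound of the quality of Theorem~\ref{thm:S_short_bound}.
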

\begin{proof}
Let $N$ be the positive integer such that $2^{N-1} \leq x < 2^N$.
Moreover, let $X_1, \ldots, X_N$ be a sequence of independent random variables with 
\begin{equation*}
\P(X_i = 0) = \P(X_i = 1) = \tfrac1{2}, \mbox{ for } i=1,\ldots,N. 
\end{equation*}
Then, for each nonnegative integer $k \leq N$,
\begin{equation*}
|\{ n < 2^N : s_2(n) = k \}| = 2^N \cdot \P(X_1 + \cdots + X_N = k) .
\end{equation*}
By the weak law of large numbers,
\begin{equation*}
\P\!\left( \Big | \frac{X_1 + \cdots + X_N}{N} - \frac{1}{2} \Big | > \varepsilon \right ) \rightarrow 0
\end{equation*}
as $N \rightarrow \infty$. 
Therefore,
\begin{align*}
|S_\varepsilon(x)| \leq \left|\left\{n < 2^N : s_2(n) \leq \left(\tfrac1{2} - \varepsilon\right) \! N \right\}\right| = o(2^N) = o(x) ,
\end{align*}
as $x \to \infty$, since $2^N \leq 2x$.
\end{proof}
We are now ready to prove Theorem~\ref{thm:S_FCF_asymp}.
\begin{proof}[Proof of Theorem~\ref{thm:S_FCF_asymp}]
Fix $\varepsilon > 0$ and let $\delta \in \;]0,\varepsilon]$ be arbitrary.
According to Lemma~\ref{lem:weak}, for $x$ sufficiently large we have $|S_\varepsilon(x)| < \delta x$ and also $|S_\varepsilon(\lb x)| < \delta \lb x$.
Let $T_\varepsilon(x) := [1,x] \setminus S_\varepsilon(x)$, so that $|T_\varepsilon(x)| >(1 - \delta)x$.
It is easily seen that $S_{\mathrm{FCF}}(n) \geq s_2(n)$ for all positive integers $n$.
Hence, for each $n \in T_\varepsilon(x)$ we have
\begin{align*}
S_{\mathrm{FCF}}(n) &\geq \sum_{\substack{j \leq \lb x \\ d_j(n) = 1}} S_{\mathrm{FCF}}(j) \geq \sum_{\substack{j \leq \lb x \\ d_j(n) = 1}} s_2(j) \geq \sum_{\substack{j \leq \lb x \\ d_j(n) = 1 \\ j \notin S_\varepsilon(\lb x)}} s_2(j) \\
&> (\tfrac1{2}-\varepsilon) \cdot \lb \lb x \sum_{\substack{j \leq \lb x \\ d_j(n) = 1 \\ j \notin S_\varepsilon(\lb x)}} 1 \\
&> (\tfrac1{2}-\varepsilon)(\tfrac1{2}-\varepsilon-\delta) \lb x \lb \lb x \\
&\geq (\tfrac1{2}-2\varepsilon)^2 \lb n \lb \lb n  .
\end{align*}
In conclusion, for any $\delta \in \;]0,\varepsilon]$ we have that for sufficiently large $x$,
\begin{equation}\label{equ:S_FCF_ineq}
S_{\mathrm{FCF}}\left(n\right) \geq \left(\frac{1}{2} - 2\varepsilon\right)^2 \cdot \lb n \lb \lb n > \left(\frac{1}{2} - 2\varepsilon\right)^2 \frac1{(\log 2)^2}\cdot \log n \log \log n,
\end{equation}
holds for at least $(1 - \delta)x$ positive integers $n \leq x$.
Therefore, (\ref{equ:S_FCF_ineq}) holds for almost all positive integers, and our claim follows.
\end{proof}

\section{Proof of Theorem~\ref{thm:S_SCF_bound}}

Fix a positive integer $n$.
In the second canonical form of $n$, we replace any occurrence of $(1 + 1)$ by the symbol $2$.
For example, after this process the second canonical form of $51$ becomes $(1 + 2)(1 + 2^{2^2})$.
Now let $t(n)$ be the number of $2$'s in this formula for $n$.
Then, upon ignoring every addition, and by repeatedly using the inequality $2^y \geq 2 \cdot y$, it follows that $n \geq 2^{t(n)}$.
To continue the example,
\begin{equation*}
51 = (1 + 2)(1 + 2^{2^2}) \geq 2 \cdot 2^{2^2} \geq 2 \cdot 2^{2 \cdot 2} \geq 2 \cdot (2\cdot (2 \cdot 2)) .
\end{equation*}
Hence $t(n) \leq \log n / \log 2$ and to prove Theorem~\ref{thm:S_SCF_bound} it is sufficient to show that $S_{\mathrm{SCF}}(n) \leq 6t(n) - 1$ for each integer $n \geq 2$.
We proceed by strong induction on $n$.
For $n = 2$ and $n = 3$ the claim is true, hence assume $n \geq 4$ and that the inequality holds for all integers in $[2,n-1]$.
If $n$ is a prime number then we have three cases:
\begin{enumerate}[(i).]
\item $n = 1 + (1 + 1) \cdot m$, with $m$ an odd integer such that $2 \leq m < n$.
\item $n = 1 + (1 + 1)^s$, with $s \geq 2$ an integer.
\item $n = 1 + (1 + 1)^s \cdot m$, with $m$ and $s$ integers such that $m$ is odd, $2 \leq m < n$ and $s \geq 2$.
\end{enumerate}
We do only case (iii), the others are similar. 
It results $t(n) = 1 + t(s) + t(m)$, so by inductive hypothesis
\begin{equation*}
S_{\mathrm{SCF}}(n) = 7 + S_{\mathrm{SCF}}(s) + S_{\mathrm{SCF}}(m) \leq 7 + (6t(s) - 1)  + (6t(m) - 1) = 6t(n) - 1 .
\end{equation*}
If $n$ is composite, let $n = p_1 \cdots p_k q_1^{b_1} \cdots q_h^{b_h}$ be its prime factorization, with $b_i \geq 2$.
We have
\begin{equation*}
t(n) = \sum_{i=1}^k t(p_i) + \sum_{j=1}^h (t(q_j) + t(b_j)) .
\end{equation*}
Since $2 \leq p_i, q_j, b_j < n$ for all $i=1,\ldots,k$ and $j=1,\ldots,h$, by inductive hypothesis we obtain
\begin{align*}
S_{\mathrm{SCF}}(n) &= k + 2h - 1 + \sum_{i=1}^k S_{\mathrm{SCF}}(p_i) + \sum_{j=1}^h (S_{\mathrm{SCF}}(q_j) + S_{\mathrm{SCF}}(b_j)) \\
&\leq 6\sum_{i=1}^k t(p_i) + 6\sum_{j=1}^h (t(q_j) + t(b_j)) - 1 \\
&= 6t(n) - 1 ,
\end{align*}
hence the proof is complete.

\section{Acknowledgements}

 We would like to thank the IAS for providing 
excellent working conditions and Noga Alon for the proof of the lower
bound for $S_{\text{short}}(n)$. 

\bibliography{counting}

\begin{thebibliography}{10}

\bibitem{Burgisser}
P.~B{\"u}rgisser.
\newblock On defining integers and proving arithmetic circuit lower bounds.
\newblock {\em Comput. Complexity}, 18(1):81--103, 2009.

\bibitem{Cheng}
Qi~Cheng.
\newblock On the ultimate complexity of factorials.
\newblock {\em Theoret. Comput. Sci.}, 326(1-3):419--429, 2004.

\bibitem{Moreira}
C.~G.~T. de~A.~Moreira.
\newblock On asymptotic estimates for arithmetic cost functions.
\newblock {\em Proc. Amer. Math. Soc.}, 125(2):347--353, 1997.

\bibitem{Svaiter}
W.~de~Melo and B.~F. Svaiter.
\newblock The cost of computing integers.
\newblock {\em Proc. Amer. Math. Soc.}, 124(5):1377--1378, 1996.

\bibitem{EdiZ}
E.~K. Gnang and D.~Zeilberger.
\newblock Zeroless arithmetic: Representing integers {ONLY} using {ONE}.
\newblock {\em J. Difference Equ. Appl.}, 19(11):1921--1926, 2013.

\bibitem{Goodstein}
R.~L. Goodstein.
\newblock On the restricted ordinal theorem.
\newblock {\em J. Symbolic Logic}, 9:33--41, 1944.

\bibitem{Hei13}
A.~P. Heinz.
\newblock The on-line encyclopedia of integer sequences, \url{http://oeis.org},
  {S}equence \href{http://oeis.org/A214833}{A214833}.

\bibitem{Hua09}
Chan~Heng Huat.
\newblock {\em Analytic number theory for undergraduates}, volume~3 of {\em
  Monographs in Number Theory}.
\newblock World Scientific, New Jersey, 2009.

\bibitem{Koiran}
P.~Koiran.
\newblock Valiant's model and the cost of computing integers.
\newblock {\em Comput. Complexity}, 13(3-4):131--146, 2004.

\bibitem{EdiK}
D.~Patrick and E.~K. Gnang.
\newblock Some integer formula encodings and related algorithms.
\newblock {\em Adv. in Appl. Math.}, 51(4):536--541, 2013.

\bibitem{Remmert}
R.~Reinhold.
\newblock {\em Theory of complex functions}, volume 122 of {\em Graduate Texts
  in Mathematics}.
\newblock Springer-Verlag, New York, 1991.
\newblock Translated from the second German edition by Robert B. Burckel,
  Readings in Mathematics.

\bibitem{Sta01}
R.~P. Stanley.
\newblock {\em Enumerative Combinatorics: volume 2}.
\newblock Cambridge Studies in Advanced Mathematics. Cambridge University
  Press, 2001.

\bibitem{Wilf}
H.~S. Wilf.
\newblock {\em generatingfunctionology}.
\newblock A K Peters, Ltd., Wellesley, MA, third edition, 2006.

\end{thebibliography}
\bibliographystyle{plain}

\end{document}